\theoremstyle{plain}
\newtheorem{theorem}{Theorem}[section]
\theoremstyle{definition}
\newtheorem{definition}[theorem]{Definition}
\theoremstyle{plain}
\newtheorem{prop}[theorem]{Proposition}
\theoremstyle{plain}
\newtheorem{corollary}[theorem]{Corollary}
\theoremstyle{plain}
\newtheorem{lemma}[theorem]{Lemma}
\theoremstyle{remark}
\newtheorem{remark}[theorem]{Remark}
\theoremstyle{remark}
\newtheorem{example}[theorem]{Example}
\theoremstyle{plain}
\theoremstyle{plain}
\newtheorem{question}[theorem]{Question}
\title{Limiting characters at ideal points detecting twice-punctured tori}
\author{Yi Wang}
\date{}
\begin{document}

\maketitle

\begin{abstract}
The limiting character, introduced by Tillmann \cite{tillmann}, has been studied recently in the context of Culler-Shalen theory. We extend the methods of the author's previous work \cite{Wang} to show that certain families of essential twice-punctured tori are detected by an ideal point on the character variety and determine the limiting character at these ideal points. We then provide numerous explicit examples, including certain two-bridge knots, 3-strand pretzel knots, and knots with non-integral toroidal surgeries. We also prove that the union of a once- and a twice-punctured torus inside the $(-3, 5, 5)$ or $(3, -5, -5)$ pretzel knot, both essential, is detected by an ideal point of the character variety and explicitly determine its limiting character. 
\end{abstract}

\section{Introduction}\label{sec:intro}

Given a one-cusped hyperbolic 3-manifold $M$, the \emph{$SL_2(\mathbb{C})$-character variety} $X(M)$ is an associated algebraic set. The character variety is a source of arithmetic invariants of 3-manifolds and provides a bridge between low-dimensional topology and arithmetic geometry. In particular, \emph{Culler-Shalen theory} \cite{Culler1983VarietiesOG} associates ideal points of $C$ to essential surfaces in $M$.

\begin{definition}
Let $C \subset X(M)$ be a one-dimensional irreducible component of the character variety, and let $x$ be an ideal point of $C$. An essential surface $S \subset M$ is said to be \emph{detected by $x$} if $S$ is an essential surface associated to $x$ via the process described in \cite{Culler1983VarietiesOG}. 
\end{definition}

This paper is primarily interested in determining the \emph{limiting character} at ideal points detecting essential surfaces. 

\begin{definition}
Let $x \in C$ be an ideal point, and let $S \subset M$ be the detected surface. Given a connected component $M_i \subset M \setminus S$, the \emph{limiting character at $M_i$} is 
\begin{equation}
	\chi_i^\infty = \lim_{\chi \to x}\chi|_{\pi_1(M_i)}
\end{equation}
It is a consequence of the work in \cite{Culler1983VarietiesOG} that $\chi_i^\infty(g)$ takes finite values for all $g \in \pi_1(M_i)$. 
\end{definition}

The limiting character was first alluded to in the ``roots of unity" phenomenon of Cooper, Gillet, et al \cite{Cooper1994PlaneCA}. When investigating the root of unity phenomenon, Dunfield \cite{Dunfield2008AP1} described the concept of a limiting character. The notion was formalized by Tillmann \cite{tillmann}, and limiting characters at ideal points play a role in the detection of Conway spheres by Paoluzzi-Porti \cite{Paoluzzi2010ConwaySA}. In \cite{Wang}, the author proved a result about limiting characters of essential surfaces detecting once-punctured tori. To state this result, we first define some terminology which will be used throughout the paper. 

\begin{definition}
Given a closed 3-manifold $M$, the \emph{JSJ decomposition} is a (unique up to isotopy) disjoint union of essential tori whose complementary regions are either hyperbolic or Seifert-fibered. The \emph{JSJ graph} of a JSJ decomposition is a graph whose vertices are complementary regions and whose edges are JSJ tori. 
\end{definition}

\begin{definition}
Given a hyperbolic 3-manifold $M$, the \emph{holonomy representation} $\rho: \pi_1(M) \to PSL_2(\mathbb{C})$ is the canonical map into $PSL_2(\mathbb{C})$ given by the hyperbolic structure (see Chapter 2 of \cite{Thurston1979TheGA}). Given a 3-manifold $M$ Seifert-fibred over a hyperbolic 2-orbifold $\mathcal{O}$, the \emph{holonomy representation} $\rho: \pi_1(M) \to PSL_2(\mathbb{R}) \hookrightarrow PSL_2(\mathbb{R})$ is $[\pm I]$ on the regular fiber, and reduces to the holonomy representation of $\mathcal{O}$ on the 2-orbifold. In an abuse of notation, we will also use the term \emph{holonomy representation} to denote lifts $\tilde{\rho}: \pi_1(M) \to SL_2(\mathbb{C})$, which exist due to Culler \cite{Culler1986LiftingRT}. The \emph{holonomy trace} will be the character of $\tilde{\rho}$. 
\end{definition}

\begin{remark}
In this paper, we will also be in the situation where we need a ``holonomy representation" for the \emph{twisted $I$-bundle over the Klein bottle}, a 3-manifold with torus boundary Seifert-fibered over a Euclidean orbifold. In Section \ref{sec:background}, we will discuss what it means to be a ``holonomy representation" for this space.
\end{remark}

Let $M$ be an integral homology solid torus (i.e. a 3-manifold with torus boundary which has the same homology as a solid torus). 



\begin{definition}
A \emph{system of punctured JSJ tori} $\{T_i\}_{i=1}^n$ is a disjoint union of non-isotopic essential $p_i$-punctured tori $T_i$, satisfying the property that for some slope $\beta$ of the torus boundary, $M(\beta)$ admits a JSJ decomposition with exactly $n$ essential tori, all of which come from capping of $T_i$ by $p_i$ points in the Dehn filling. A \emph{system of $p$-punctured JSJ tori} is a system of punctured JSJ tori such that all $p_i = p$. 
\end{definition}

\begin{definition}
Given a slope $\beta$ on the torus boundary, we say that $M$ is \emph{$\beta$-rigid} if $X(M(\beta))$ is 0-dimensional. If $\beta = 0$, we say that $M$ is \emph{longitudinally rigid}. 
\end{definition}

\begin{theorem}\cite{Wang}
Let $M = S^3 \setminus K$ be a longitudinally rigid non-fibered hyperbolic knot complement, and let $\{T_i\}_{i=1}^n \subset M$ be a system of once-punctured JSJ tori with slope 0. Then $\bigcup_{i=1}^nT_i$ is detected by an ideal point $x$ on an irreducible component of $X(M)$ for which the trace of the longitude is nonconstant, and the limiting character at $x$ is the trace of the holonomy representation of the JSJ complementary regions of $M(0)$. 
\end{theorem}

In other words, given a set of genus one Seifert surfaces that cap off to a JSJ decomposition, there exists an ideal point in the character variety detecting that (possibly disconnected) surface, and we gain information about the limiting character. 

\begin{remark}
This theorem was motivated by the following. Chinburg-Reid-Stover \cite{Chinburg2017AzumayaAA} defined a particular hyperbolic knot invariant called the \emph{canonical Azumaya algebra} $A_{k(C)}$, associated to the function field $k(C)$ of an irreducible component $C$ of $X(M)$. A $K$-theoretic argument was used in \cite{Chinburg2017AzumayaAA} to extend equivalence classes of Azumaya algebras over \emph{ideal points}, i.e. ``points at infinity", of $C$. In \cite{Wang}, the author initiated the study of \emph{tautological extension} of $A_{k(C)}$ over ideal points of $C$, which is a natural refinement of the Chinburg-Reid-Stover invariant. It turns out that if the limiting character at an ideal point is irreducible, tautological extension is satisfied at that ideal point. This has no bearing on the content of this paper, but is discussed in detail in the background sections of \cite{Wang}. 
\end{remark}

\begin{remark}
The above theorem is stated differently from its statement in \cite{Wang}, since the original statement was more in terms of the Chinburg-Reid-Stover invariant. In this restatement, the hypotheses are modified to better fit the context and motivations of this paper, and the exact same proof goes through. 
\end{remark}

The following corollary establishes an infinite family of examples to which the above theorem applies:

\begin{corollary}\label{cor:twobridgeseifert}
Any genus one Seifert surface in a genus one nonfibered two-bridge knot complement $M$ is detected by an ideal point of a component $C$ in $X(M)$ where the limiting character is the holonomy trace of the JSJ component of $M(0)$. 
\end{corollary}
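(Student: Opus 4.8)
\emph{Sketch of proof.} The plan is to deduce \Cref{cor:twobridgeseifert} from Theorem~1.1 (the theorem of \cite{Wang} reproduced above) by verifying its hypotheses for $M = S^3\setminus K$, where $K$ is a genus one nonfibered two-bridge knot. I would begin by recalling the structure theory: a genus one two-bridge knot is a double twist knot (its even continued fraction expansion has length two), its minimal genus Seifert surface $F$ is a once-punctured torus obtained by plumbing two unknotted twisted annular bands, and by the Hatcher--Thurston classification of incompressible surfaces in two-bridge knot complements this $F$ is the unique incompressible Seifert surface up to isotopy; hence ``any genus one Seifert surface'' means this $F$, and $\partial F$ has slope $0$. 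The knots in question are exactly the double twist knots other than the trefoil and the figure-eight (the fibered ones), and all of them are hyperbolic. It therefore suffices to verify, taking $n = 1$ and $T_1 = F$ in Theorem~1.1: (a) the capped-off torus $\widehat F\subset M(0)$ is a JSJ torus of $M(0)$; (b) no JSJ complementary region of $M(0)$ is Seifert fibered over the annulus; (c) no JSJ torus of $M(0)$ bounds two hyperbolic components on both sides; and (d) every irreducible component of $X(M)$ containing an irreducible character is a norm curve.

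Condition (d) is the quickest. By Hatcher--Thurston, $M$ contains no closed essential surface, so $M$ is small; for a small one-cusped manifold every irreducible component of $X(M)$ carrying an irreducible character is one-dimensional, and the associated Culler--Shalen seminorm on $H_1(\partial M;\mathbb R)$ is an honest norm --- a degeneracy of the seminorm would yield an ideal point detecting a closed essential surface \cite{Culler1983VarietiesOG}, contradicting smallness. Hence every such component is a norm curve.

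Conditions (a), (b), (c) come from the JSJ analysis of $M(0)$, which is the heart of the matter. Since $F$ is minimal genus and $\partial F$ has slope $0$, $\widehat F$ is incompressible in $M(0)$; since $\widehat F$ is Poincar\'e dual to a generator of $H^1(M(0);\mathbb Z)$ it is non-separating, so $M(0)\setminus\widehat F$ is connected with two torus boundary components, and a direct cut-and-paste identifies $M(0)\setminus\widehat F$ with $M\setminus F$ together with a $2$-handle attached along a copy of $\partial F$ in its (genus-two) boundary. The key claim is that this manifold is hyperbolic. To prove it I see two routes: identify $M(0)\setminus\widehat F$ explicitly using the Brittenham--Wu classification of toroidal Dehn surgeries on two-bridge knots and then certify its hyperbolicity; or analyze $M\setminus F$ directly (showing it is simple --- no essential sphere, disk, annulus, or torus) and invoke the $2$-handle addition theorems of Scharlemann--Wu, which produce a hyperbolic manifold provided $\partial F$ is not among finitely many exceptional curves for $M\setminus F$. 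The archetypal degeneration is $M\setminus F = F\times I$, giving $M(0)\setminus\widehat F = T^2\times I$, which is precisely the fibered case that has been excluded. Granting the claim, $M(0)$ is toroidal with JSJ decomposition consisting of the single hyperbolic piece $M(0)\setminus\widehat F$ glued to itself along $\widehat F$; thus $\widehat F$ is the unique JSJ torus (giving (a)), no JSJ region is Seifert fibered at all (giving (b)), and $\widehat F$, being non-separating, does not bound two hyperbolic components on both sides --- the same hyperbolic piece lies on each side --- giving (c).

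With (a)--(d) established, Theorem~1.1 applies and shows that $F$, hence the unique genus one Seifert surface of $M$, is detected by an ideal point $x$ of a norm curve $\tilde{C}\subset\widetilde{X(M)}$, and that $A_{k(C)}$ tautologically extends over $x$; this is the corollary. The step I expect to be the main obstacle is the hyperbolicity of $M(0)\setminus\widehat F$. One must control this \emph{uniformly} over all admissible double twist parameters, ruling out both an essential annulus (which would create a Seifert-fibered-over-annulus JSJ piece, breaking (b)) and an essential torus separating two distinct hyperbolic pieces (breaking (c)). The twisted bands making up $F$ are the source of the difficulty: for large twisting they could, a priori, inject Seifert-fibered structure into $M(0)\setminus\widehat F$, while for small twisting $M\setminus F$ is only barely hyperbolic; I would expect both regimes to need separate, fairly hands-on arguments.
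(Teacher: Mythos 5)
Your strategy is the right one: \Cref{cor:twobridgeseifert} is indeed intended to follow from Theorem~1.1 by verifying its hypotheses for $M = S^3\setminus K$, $K$ a genus-one nonfibered two-bridge knot, and your reduction to (a)--(d), the cut-and-paste identification of $M(0)\setminus\widehat{F}$ as $(M\setminus F)$ with a 2-handle attached along $\partial F$, and the exclusion of the figure-eight as the degenerate $T^2\times I$ case are all sound. However, the step you yourself flag as ``the main obstacle'' is not just a hard-to-prove claim---it is the wrong claim. You assert that $M(0)\setminus\widehat{F}$ should be hyperbolic uniformly over all admissible parameters, and you treat any Seifert-fibered JSJ piece as a failure of hypothesis (b). But hypothesis (b) of Theorem~1.1 only forbids a piece Seifert-fibered over the annulus \emph{with no cone points}; a cable space, i.e.\ one Seifert-fibered over $A^2(q)$ with $q\ge 2$, is explicitly allowed (see Lemma~\ref{lma:hyperbolicandannulus}, which proves negative-flexibility precisely for hyperbolic pieces \emph{and} for pieces fibered over $A^2(q)$). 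In fact, for the twist knots $K_{(2n,\pm 2)}$ with $|n|>1$---which form a large share of the genus-one nonfibered two-bridge knots---the manifold $M(0)\setminus\widehat{F}$ is a cable space, not a hyperbolic piece; a blanket hyperbolicity claim is therefore false. What the verification actually requires is to determine the JSJ structure of $M(0)$ explicitly (via the classification of toroidal surgeries on two-bridge knots) and to check that each complementary region is of one of the two permitted types, not to prove hyperbolicity across the board.

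Two smaller points. First, you must also justify that $\widehat{F}$ is the \emph{entire} JSJ system of $M(0)$, not just one incompressible torus in it---hypotheses (b) and (c) quantify over all JSJ tori and regions, and if the piece $M(0)\setminus\widehat{F}$ itself decomposed further, the analysis changes; your sketch assumes this without comment. Second, your argument for (d) via smallness of two-bridge complements is a plausible heuristic, but it is not quite a proof that every component is a norm curve: a priori a trace function of a single boundary slope could be constant on a component without producing a closed detected surface. The paper handles this by citing the explicit structure results of Macasieb--Petersen--van~Luijk and Petersen for two-bridge character varieties (see the discussion preceding Theorem~\ref{thm:twobridge}), and that is the safer route. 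The high-level architecture of your reconstruction is correct, but the JSJ verification needs to be recast to allow the cable-space alternative rather than to rule it out.
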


In this paper, we provide an alternate proof of the ideal point detection of almost all twice-punctured JSJ tori in two-bridge knots and investigate the phenomenon in some pretzel knots. In addition, the limiting characters at these ideal points are completely determined. The main theorem is as follows: 

\begin{theorem}\label{thm:main}
Suppose $M = S^3 \setminus K$ is an $\beta$-rigid hyperbolic knot complement, where $\beta$ is the boundary slope of a system of separating twice-punctured JSJ tori $\mathbb{T} = \bigcup_{i=1}^{n-1}T_i$ satisfying the following: 
\begin{enumerate}
	\item The JSJ graph of $M(\beta)$ is a line 
	\item $M(\beta)$ has JSJ components that are either 
	\begin{enumerate}
		\item Seifert-fibered over $D(p, q)$ with $p \geq 2, q > 2$
		\item Seifert-fibered over $A^2(r)$ with $r \geq 2$
		\item hyperbolic
		\item the twisted $I$-bundle over the Klein bottle
	\end{enumerate}
	\item If $n = 2$, the JSJ torus in $M(\beta)$ does not bound two twisted $I$-bundles of Klein bottles
	\item Let $\mathcal{O}$ be a Seifert-fibered JSJ component $\mathcal{O}$ of $M(\beta)$ meeting a twisted $I$-bundle over the Klein bottle, denoted $\tilde{K}$. Then the regular fiber of $\mathcal{O}$ is not glued to the regular fiber of $\tilde{K}$ Seifert-fibered over $D^2(2, 2)$
\end{enumerate}
Then $\mathbb{T}$ is detected by an ideal point $x$ on an irreducible component of the character variety for which the trace of $\beta$ is nonconstant, and the limiting character at $x$ is the holonomy trace of the JSJ complementary regions of $M(\beta)$. 
\end{theorem}

The central ideal of the proof is to mimic the arguments of Paoluzzi-Porti \cite{Paoluzzi2010ConwaySA}, which shows an analogous theorem for essential Conway spheres. In particular, we show in Section \ref{sec:background} that holonomy representations of the JSJ components of $M(\beta)$ have the same traces on the boundary tori, but they are nonconjugate. This means that there is a sequence of characters approaching the holonomy representations of the JSJ components of $M(\beta)$, but the limit cannot glue to a legitimate representation of the fundamental group of the original knot complement. This is captured in Lemma 7 of \cite{Paoluzzi2010ConwaySA}, which we use to conclude our theorem. 

\medskip

We then demonstrate that this theorem applies to several explicit well-known families of knots, such as two-bridge knots and particular pretzel knots. One notable family of knots to which Theorem \ref{thm:main} applies is the \emph{Eudave-Mu$\tilde{\text{n}}$oz knots}, which were shown by \cite{eudavemunoz} and \cite{gordonleucke} to be the only knot complements in $S^3$ which admitted non-integral toroidal Dehn fillings. Theorem \ref{thm:main}, along with Corollary \ref{cor:twobridgeseifert}, is used to show the following, more explicit corollary.

\begin{corollary}\label{cor:examples}
Let $\mathbb{T} \subset M$ be one of the following systems of punctured JSJ tori in knot complements:
\begin{enumerate}
	\item any such system in a two-bridge knot that is not the figure-eight knot or the trefoil
	\item an essential twice-punctured torus in a $(-2, 3, 2n+1)$-pretzel knot with $n \not\equiv 1 \text{ (mod } 3$)
	\item an essential twice-punctured torus with slope $\beta$ in the Eudave-Mu$\tilde{\text{n}}$oz knots which are $\beta$-rigid
\end{enumerate} 
Then $\mathbb{T}$ is detected by an ideal point $x$ on a curve in the character variety for which the trace of $\beta$ is nonconstant, and the limiting character is the holonomy trace of the JSJ complementary regions of $M(\beta)$. 
\end{corollary}

\begin{remark}\label{rmk:known}
The detection results for all the knots in Corollary \ref{cor:examples} follows quickly using previous results on slope detection. Detection of punctured tori in two-bridge knots follows from combining the results of Hatcher-Thurston \cite{Hatcher1985IncompressibleSI} and Ohtsuki \cite{ohtsuki}. Detection of punctured tori in $(-2, 3, 2n+1)$ pretzel knots comes from combining the work of Mattman \cite{Mattman} and Hatcher-Oertel \cite{hatcheroertel}. Detection of twice-punctured tori in Eudave-Mu$\tilde{\text{n}}$oz knots does not seem to be proven in the literature, though the author suspects that arguments from Ni-Zhang \cite{nizhang} may be used to conclude this. The main value of the results in this paper is to establish the limiting character, and to give geometric meaning to the detection of certain essential surfaces through ideal points. The limiting character itself has some applications, namely to describe extensions of the Chinburg-Reid-Stover invariant from \cite{Chinburg2017AzumayaAA} and potential applications to ordering 3-manifold groups, as in the author's other paper \cite{orderability}. 
\end{remark}

\begin{remark}
The hypotheses of longitudinal or $\beta$-rigidity are stronger than what is needed for the proof; a more technical and specific condition suffices, and this is used to demonstrated detection results for Seifert surfaces of $(-3, 3, 2n+1)$ pretzel knots, which are not longitudinally rigid, in the author's other paper \cite{orderability}. For cases where we do not have longitudinal or $\beta$-rigidity, more detailed analysis of the topology of $M$ and the character variety is required to complete analogous detection / limiting character results. Such an example is described below, and explicitly proven in the final section of this paper. 
\end{remark}

We also consider systems of punctured JSJ tori consisting of once- and twice-punctured tori. The first known examples come from a pair of odd pretzel knots. Sekino \cite{sekino} determined that if $M$ is the $(-3, 5, 5)$ or the $(3, -5, -5)$ pretzel knot complement, there exists a once- and a twice-punctured torus with boundary slope 0, and the JSJ decomposition of $M(0)$ has two components, one of which is a thickened thrice-punctured sphere, and the other is the trefoil knot complement. We have the following theorem, in vein with the previous results:

\begin{theorem}
Let $M$ be the $(-3, 5, 5)$ or $(3, -5, -5)$ pretzel knot complement. Then the system of punctured JSJ tori at slope 0 is detected by an ideal point on the character variety on which the trace of the longitude is nonconstant, and the limiting character at that ideal point restricts to the holonomy representation of the thrice-punctured sphere and the trefoil knot complement. 
\end{theorem}

\subsection{Outline of the paper}

Section \ref{sec:background} covers necessary background for the proof of Theorem \ref{thm:main}. Section \ref{sec:proof} goes through the proof of Theorem \ref{thm:main}. Section \ref{sec:examples} applies Theorem \ref{thm:main} to several families of knots, including those addressed in Corollary \ref{cor:examples}. 

\subsection{Acknowledgements}

The author thanks the anonymous referees for providing valuable comments on an earlier draft of the paper, particularly alternative arguments for detection of ideal points. The author also thanks Nathan Dunfield for providing data on Eudave-Mu$\tilde{\text{n}}$oz knots. 

\section{Background for the proofs}\label{sec:background}

\subsection{The JSJ structure of $M(\beta)$}

Let $M = S^3 \setminus K$ be a hyperbolic knot complement containing a system of twice-punctured JSJ tori $\mathbb{T} = \bigcup_{i=1}^{n-1}T_i$ with slope $\beta$, and the JSJ graph of $M(\beta)$ is a line. Since each $T_i$ is separating, there are $n$ complementary regions $M_i \subset M \setminus \mathbb{T}$. The case where $n = 3$ is depicted below. Here $M_1$ and $M_3$ have boundary a genus 2 surface, while $M_2$ has boundary a genus 3 surface. 

\begin{figure}[h]
	\centering
	\includegraphics[scale=.5]{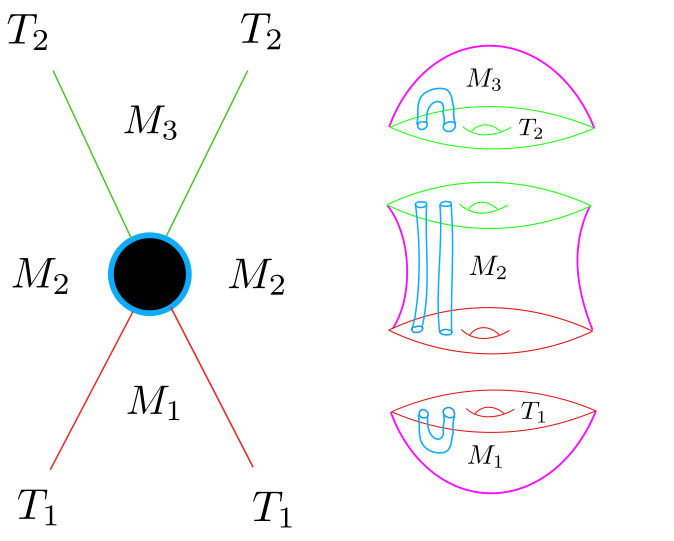}
	\caption{Left: Schematic of how $T_1$ and $T_2$ are arranged in $M$. Right: The complementary regions $M_i$ when cutting $M$ via $T_1$ and $T_2$. Note the blue annuli, which come from a tubular neighborhood of the knot.}
\end{figure}

Under Dehn filling, each blue annulus in the above figure is filled with a 2-handle. Under this handle addition, $M_i$ becomes $\mathcal{O}_i$, a JSJ complementary region of $M(\beta)$. In particular, the complementary regions at the two end vertices have one torus boundary component, while the rest of the complementary regions have two torus boundary components. In summary, we have the following lemma:


\begin{lemma}\label{lma:jsjstructure}
Given a system of twice-punctured JSJ tori $\mathbb{T} = \bigcup_{i=1}^{n-1}T_i$ with slope $\beta$ such that the JSJ graph of $M(\beta)$ is a line, there are $n$ complementary regions $M_i \subset M$, two of which have boundary a single genus 2 surface, and the rest of which have $\partial M_i$ a genus 3 surface. In $M(\beta)$, there are $n - 1$ JSJ tori $\widetilde{T_i}$ with $n$ complementary regions $\mathcal{O}_i$, two of which have one torus as their boundary, and the rest of which have two tori as their boundary. 
\end{lemma}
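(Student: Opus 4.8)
The plan is to verify the two halves of the statement separately: first the combinatorics of cutting $M$ along $\mathbb{T}$, then the effect of Dehn filling along $\beta$.

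First I would establish the structure of $M \setminus \mathbb{T}$. Since the $T_i$ are disjoint, non-isotopic, and each is separating, the dual graph of the decomposition is a tree (cutting along a separating surface splits a connected manifold into two pieces, and since no two $T_i$ are isotopic the resulting tree has no redundant edges). A tree with $n-1$ edges has $n$ vertices, giving the $n$ complementary regions $M_i$. Because $M$ has a single torus boundary component, exactly one $M_i$ inherits that torus in its boundary; but what matters for the genus count is that each $T_i$ is a twice-punctured torus, so capping the two punctures of $T_i$ with the annular pieces of $\partial N(K)$ on the appropriate side turns each copy of $T_i$, together with the annulus, into a closed genus-two surface. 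I would argue that a terminal vertex of the tree (a leaf) meets only one $T_i$ and therefore has boundary a single genus-two surface, while an interior vertex of the tree meets two of the $T_i$ (it cannot meet more, since each $M_i$ between consecutive tori in the linear arrangement shown in the figure is bounded by exactly the two adjacent ones) and hence has two genus-two boundary surfaces. Here I should note that the arrangement is in fact linear — the tree is a path — which follows from the fact that $M$ has only one cusp, so the annuli from $N(K)$ thread through the regions in sequence; this is exactly what the schematic figure depicts, and I would make the linearity explicit as part of the argument.

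Next I would analyze the Dehn filling. Filling $M$ along slope $\beta$ glues a solid torus $V$ to $\partial M$; the meridian disk of $V$ intersects $\mathbb{T}$ in the arcs determined by $\beta$, and the effect on each $M_i$ is to glue an annulus $A \subset \partial V$ to a handle of each genus-two boundary surface of $M_i$, as shown in the second figure. Compressing the genus-two surface along the co-core of that handle's annulus — equivalently, the genus-two surface with an annulus glued along a handle-curve becomes, after the obvious isotopy, a torus — converts each genus-two boundary component into a torus. Thus a leaf region $M_i$ becomes a JSJ piece $\mathcal{O}_i$ with one torus boundary, and an interior region becomes $\mathcal{O}_i$ with two torus boundaries, while the cutting surfaces $T_i$ become the tori $\widetilde{T_i}$. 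This gives $n-1$ tori $\widetilde{T_i}$ and $n$ regions $\mathcal{O}_i$ in $M(\beta)$ with the stated boundary pattern.

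The main obstacle I anticipate is not the counting but justifying that the annulus-gluing genuinely produces a torus boundary and that the $\widetilde{T_i}$ are the essential JSJ tori of $M(\beta)$ rather than merely incidental tori — i.e. that no $\widetilde{T_i}$ becomes compressible or boundary-parallel after filling, and that the description is compatible with a genuine JSJ decomposition. For the boundary-torus claim, the key point is that each twice-punctured torus $T_i$ has two boundary curves of the same slope $\beta$ on $\partial M$, so the two Dehn-filling annuli attached on a given side of $T_i$ join up its two boundary circles; tracing through the identification (as in the second figure) shows the genus-two surface plus these annuli retracts to a torus. For the essentiality and JSJ-compatibility, I would invoke that the $T_i$ are essential twice-punctured tori and that $\beta$ is their common boundary slope, so standard results on surgery along boundary slopes of essential surfaces guarantee the capped-off $\widetilde{T_i}$ remain incompressible in $M(\beta)$; strictly speaking the lemma as stated only asserts the existence of this torus/region structure, so the finer JSJ claims can be deferred, but I would remark on why the $\widetilde{T_i}$ are the natural candidates for JSJ tori to set up the hypotheses of Theorem \ref{thm:main}.
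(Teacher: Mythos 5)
Your proposal follows essentially the same route as the paper, which treats this lemma as a combinatorial bookkeeping statement established by inspection (the paper's ``proof'' is really the two figures plus the surrounding prose explaining the cut-and-fill picture). Both you and the paper observe that separating surfaces give a tree dual graph, that the boundary slope $\beta$ means each copy of $T_i$ in a complementary region is capped by an annular piece of $\partial M$ into a closed genus-two surface, and that Dehn filling along $\beta$ replaces those annuli by annuli in the filling solid torus so that each genus-two boundary becomes a torus.

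Two cautionary notes. First, your assertion ``exactly one $M_i$ inherits that torus in its boundary'' is false and contradicts what you say immediately afterward: when $M$ is cut along $\mathbb{T}$ the torus $\partial M$ itself is cut by the $2(n-1)$ parallel boundary circles into $2(n-1)$ annuli, and \emph{every} $M_i$ receives some of those annuli in its boundary --- indeed it is precisely these annular pieces of $\partial M$ that cap the copies of $T_i$ into genus-two surfaces, as you correctly use in the very next clause. You should simply delete the offending phrase. Second, your justification that the dual tree is a \emph{path} (``the annuli thread through the regions in sequence'') is not yet a proof: a closed walk around $\partial M$ traverses each dual edge twice, and such a walk in a tree is an Euler tour that can realize a star or any other tree, not just a path, so linearity does not follow from the stated hypotheses alone. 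The paper shares this gap --- it also asserts linearity by reference to the schematic picture rather than by argument --- so you are not missing anything the paper supplies; but if you want your write-up to be airtight you should either derive linearity from an additional hypothesis (e.g.\ that the capped tori $\widehat{T_i}$ form a linear JSJ graph, as in the intended applications) or flag it as an assumption carried forward from the geometric picture.
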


\begin{remark}
This is a different situation compared to the once-punctured torus case in \cite{Wang}, in which the JSJ graph of $M(0)$ graph was a circle. This slightly changes the calculations necessary for the proof of Theorem \ref{thm:main}.
\end{remark}

\subsection{The twisted $I$-bundle over the Klein bottle}

One space that often appears in JSJ decompositions of Dehn fillings capping of twice-punctured tori in knot complements is the \emph{twisted $I$-bundle over the Klein bottle}, denoted $\tilde{K}$. This is an orientable 3-manifold with torus boundary. For an in-depth discussion of $\tilde{K}$, we refer the reader to \cite{LidmanWatson}. 

\medskip

We will study representations of $\pi_1(\tilde{K})$. It is known that
\begin{equation}
	\pi_1(\tilde{K}) = \langle s, t \mid s^2t^2 = 1 \rangle
\end{equation}
The incompressible torus boundary has fundamental group generated by the meridian $st$ and longitude $s^2$. It will be useful to compute the representation and character varieties of this group. Utilizing the trace relation
\begin{equation}
	\text{tr}(AB) = \text{tr}(A)\text{tr}(B) - \text{tr}(AB^{-1})
\end{equation}
we can determine that the character variety is a complex cubic surface:
\begin{equation}
	X(\tilde{K}) = \left\{(x, y, z) \in \mathbb{C}^3 \mid xyz - x^2 - y^2 = 0\right\}
\end{equation}
with coordinates are $x = \text{tr}(s), y = \text{tr}(t), z = \text{tr}(st)$. We now determine all the irreducible $SL_2(\mathbb{C})$-representations of $\pi_1(\tilde{K})$. Any such representation is conjugate to
\begin{equation}
	\rho(s) = \begin{pmatrix}x&1\\0&x^{-1}\end{pmatrix} \ \ \ \ \ \rho(t) = \begin{pmatrix}y&0\\r&y^{-1}\end{pmatrix}
\end{equation}
Determining $\rho(s^2t^2)$ and setting the entries equal to the identity, we obtain that any irreducible representation must satisfy
\begin{equation}
	\rho(s) = \begin{pmatrix}\pm i&1\\0&\mp i \end{pmatrix} \ \ \ \ \ \rho(t) = \begin{pmatrix}\pm i&0\\r&\mp i\end{pmatrix} \ \ \ \ \ \rho(st) = \begin{pmatrix}r-1&-i\\-ri&-1\end{pmatrix}
\end{equation}
which correspond to the point $(0, 0, r-2) \in X(\tilde{K})$. The reducible representations can be conjugated into
\begin{equation}
	\rho(s) = \begin{pmatrix}x&1\\0&x^{-1}\end{pmatrix} \ \ \ \ \ \rho(t) = \begin{pmatrix}y&r\\0&y^{-1}\end{pmatrix}
\end{equation}
Once again solving for $\rho(s^2t^2) = I$, we find that the reducible representations can be given by two families. 
\begin{enumerate}
	\item The representations that satisfy\begin{equation}
		\rho(s) = \begin{pmatrix}\pm i&1\\0&\mp i\end{pmatrix} \ \ \ \ \ \rho(t) = \begin{pmatrix}\pm i&r\\0&\mp i\end{pmatrix}
	\end{equation}
	which correspond to the points $(0, 0, \pm 2) \in X(\tilde{K})$. 
	\item The representations that satisfy \begin{equation}
		\rho(s) = \begin{pmatrix}x&1\\0&x^{-1}\end{pmatrix} \ \ \ \ \ \rho(t) = \begin{pmatrix}\pm x^{-1}&\mp1\\0&\pm x\end{pmatrix}
	\end{equation}
	which correspond to the points $(a, \pm a, \pm 2) \in X(\tilde{K})$. Note that the traces of these points contain the traces of the previous type of reducible representation.
\end{enumerate}
We will be interested in representations of $\pi_1(\tilde{K})$ where $\text{tr}(st) = z = \pm 2$ and $\text{tr}(s^2) = x^2 - 2 = \pm 2$. From the character variety, we see that the only points in $X(\tilde{K})$ satisfying this condition are $(0, 0, \pm 2)$ or $(\pm2, \pm2, \pm2)$ (with an even number of negatives) i.e. the representations 
\begin{equation}
	\rho_A(s) = \begin{pmatrix}\pm i&1\\0&\mp i\end{pmatrix} \ \ \ \ \ \rho_A(t) = \begin{pmatrix}\pm i&0\\4&\mp i\end{pmatrix}
\end{equation}
\begin{equation}
	\rho_B(s) = \begin{pmatrix}\pm i&1\\0&\mp i\end{pmatrix} \ \ \ \ \ \rho_B(t) = \begin{pmatrix}\pm i&r\\0&\mp i\end{pmatrix}
\end{equation}
\begin{equation}
	\rho_C(s) = \begin{pmatrix}\pm1&1\\0&\pm1\end{pmatrix} \ \ \ \ \ \rho_C(t) = \begin{pmatrix}\pm1&\mp1\\0&\pm 1\end{pmatrix}
\end{equation}
Notice that all of these representations are reducible, since the coordinates always satisfy $x^2 + y^2 + z^2 - xyz - 4 = 0$. We will use this in the proof of the main theorem. To summarize, we state the following lemma:

\begin{lemma}\label{lma:twistedbundle}
Let $\tilde{K}$ be the twisted $I$-bundle over the Klein bottle. Then the following are the representations of $\pi_1(\tilde{K})$ such that $\text{tr}(st) = \pm 2$ and $\text{tr}(s^2) = \pm 2$:
\begin{equation}
	\rho_A(s) = \begin{pmatrix}\pm i&1\\0&\mp i\end{pmatrix} \ \ \ \ \ \rho_A(t) = \begin{pmatrix}\pm i&0\\4&\mp i\end{pmatrix}
\end{equation}
\begin{equation}
	\rho_B(s) = \begin{pmatrix}\pm i&1\\0&\mp i\end{pmatrix} \ \ \ \ \ \rho_B(t) = \begin{pmatrix}\pm i&r\\0&\mp i\end{pmatrix}
\end{equation}
\begin{equation}
	\rho_C(s) = \begin{pmatrix}\pm1&1\\0&\pm1\end{pmatrix} \ \ \ \ \ \rho_C(t) = \begin{pmatrix}\pm1&\mp1\\0&\pm 1\end{pmatrix}
\end{equation}
\end{lemma}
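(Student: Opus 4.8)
The plan is to classify all $SL_2(\mathbb{C})$-representations of $\pi_1(\tilde{K}) = \langle s, t \mid s^2 t^2 = 1\rangle$ up to conjugacy and then intersect that classification with the two trace conditions. I would begin by pinning down the character variety: applying the trace identity $\operatorname{tr}(AB) = \operatorname{tr}(A)\operatorname{tr}(B) - \operatorname{tr}(AB^{-1})$ to the relator $s^2 t^2 = 1$ (so that $\operatorname{tr}(s^2 t^2) = 2$) and rewriting everything in terms of $x = \operatorname{tr}(s)$, $y = \operatorname{tr}(t)$, $z = \operatorname{tr}(st)$ yields the cubic surface $X(\tilde{K}) = \{xyz - x^2 - y^2 = 0\}$. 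On this surface the two hypotheses become elementary: $\operatorname{tr}(s^2) = x^2 - 2 = \pm 2$ forces $x \in \{0, 2, -2\}$, and substituting $z = 2$ into $xyz = x^2 + y^2$ gives $(x - y)^2 = 0$ while $z = -2$ gives $(x + y)^2 = 0$; intersecting leaves only the six characters $(0, 0, \pm 2)$ and $(\pm 2, \pm 2, \pm 2)$ (with an even number of minus signs).

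Next I would lift these finitely many characters to representations. Each of the six points satisfies $x^2 + y^2 + z^2 - xyz - 4 = 0$, and since $\operatorname{tr}([\rho(s), \rho(t)]) = x^2 + y^2 + z^2 - xyz - 2$, this says $\operatorname{tr}([\rho(s), \rho(t)]) = 2$; for a two-generator group that is exactly the condition that $\rho$ be reducible. Hence every representation with one of these characters is conjugate into upper-triangular form, and I would recover all conjugacy classes by writing $\rho(s) = \begin{pmatrix} x & 1 \\ 0 & x^{-1} \end{pmatrix}$, $\rho(t) = \begin{pmatrix} y & r \\ 0 & y^{-1} \end{pmatrix}$ (allowing the degenerate diagonal sub-cases), computing $\rho(s)^2 \rho(t)^2$ directly, and setting it equal to $I$. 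This forces the eigenvalues to be $\pm i$ or $\pm 1$, as dictated by the value of $x$, and determines $r$; running the same computation with $\rho(t)$ lower-triangular confirms that no irreducible representation survives the trace conditions, consistently with the reducibility just established. Collecting and normalizing the solutions produces precisely the families $\rho_A$, $\rho_B$, $\rho_C$ in the statement.

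The main obstacle is bookkeeping rather than conceptual: one must track the independent sign choices among the eigenvalues $\pm i, \pm 1$ as they propagate through the relator equation $\rho(s)^2 \rho(t)^2 = I$, and one must determine, over each reducible character, how many conjugacy classes actually occur — a reducible character is realized both by a semisimple representation and, in general, by non-semisimple ones distinguished by the off-diagonal parameter $r$, which is why $\rho_B$ (and the role of $r$ in it) cannot be dropped. The rest — the cubic surface equation, the algebra carving out the six points, and the reducibility conclusion from the commutator trace identity — is routine.
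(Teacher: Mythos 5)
Your proposal is correct and arrives at the same classification, but with a reordering of the steps that is worth noting. The paper first enumerates \emph{all} $SL_2(\mathbb{C})$-representations of $\pi_1(\tilde K)$ up to conjugacy --- irreducibles via the normal form with $\rho(s)$ upper- and $\rho(t)$ lower-triangular, yielding the character $(0,0,r-2)$, and reducibles via both matrices upper-triangular, yielding the two families $(0,0,\pm 2)$ and $(a,\pm a,\pm 2)$ --- and only then imposes the two trace conditions to filter down to $\rho_A,\rho_B,\rho_C$. You instead impose the trace conditions on the cubic surface immediately, carve out the six characters $(0,0,\pm 2)$ and $(\pm 2,\pm 2,\pm 2)$ with an even number of minus signs, and observe via the Fricke commutator identity $\operatorname{tr}[\rho(s),\rho(t)]=x^2+y^2+z^2-xyz-2=2$ that every such character is reducible. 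This buys you something small but real: there is no need to work out the irreducible normal form at all, since reducibility licenses restricting to simultaneously upper-triangular matrices from the start (which you correctly flag when you say the lower-triangular check is redundant but consistent). The one cosmetic wrinkle is that the paper's $\rho_A$ is presented with $\rho_A(t)$ lower-triangular and $r=4$ (it is the boundary point where the would-be irreducible family degenerates to a reducible), so your upper-triangularization will produce a conjugate rather than the literal matrices in the statement; but since Lemma~\ref{lma:ibundlenegative} downstream only uses conjugation-invariant data such as $\operatorname{tr}(st)$ and $\operatorname{tr}(s^2)$, this is harmless.
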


\subsection{$SL_2(\mathbb{C})$-compatibility}

Suppose $M = S^3 \setminus K$ is a hyperbolic knot complement, with $\mathbb{T}$ a system of twice-punctured JSJ tori with slope $\beta$. We must analyze the behavior of the signs of the traces of the boundary components of $M(\beta)$. In particular, the proof of the main theorem requires a choice of a set of characters on the JSJ complementary regions $M(\beta)$ such that the traces on the boundary tori match. This property, defined in \cite{Wang}, is known as \emph{$SL_2(\mathbb{C})$-compatibility}. Proving $SL_2(\mathbb{C})$-compatibility makes heavy use of \emph{negative-flexibility}, which we define here: 

\begin{definition}
A pair $(M, \rho)$ consisting of a 3-manifold $M$ with torus boundary components $T_1, \dots, T_m$ and a representation $\rho: \pi_1(M) \to PSL_2(\mathbb{C})$ is called \emph{negative-flexible} if for any $(\gamma_1, \dots, \gamma_m) \in \pi_1(T_1) \times \dots \times \pi_1(T_m)$ with $\gamma_i$ simple and nontrivial, there exists a lift $\hat{\rho}: \pi_1(M) \to SL_2(\mathbb{C})$ such that $\text{tr}(\hat{\rho}(\gamma_i)) = -2$ for all $i = 1, \dots, m$. 
\end{definition}

To begin, we recall the following lemma, proved in \cite{Wang}:

\begin{lemma}\label{lma:hyperbolicandannulus}
Suppose $M$ is either hyperbolic or Seifert-fibered over $A^2(q)$. Let 
\begin{equation}
	\rho = \begin{cases}
		\text{holonomy reprsentation of } M & $M$ \text{ hyperbolic} \\ \text{holonomy representation of } A^2(q) & $M$ \text{ Seifert-fibered over } A^2(q)
	\end{cases}
\end{equation}
Then $(M, \rho)$ is negative-flexible. 
\end{lemma}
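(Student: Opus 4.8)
The plan is to exhibit, among the finitely many lifts of $\rho$ to $SL_2(\mathbb{C})$, one whose trace on every prescribed peripheral curve $\gamma_i$ equals $-2$; the obstruction to this will turn out to be purely $\mathbb{Z}/2$-cohomological. First I would record that the peripheral subgroups are parabolic: for $M$ hyperbolic this is clear, and for $M$ Seifert-fibered over $A^2(q)$ we give $A^2(q)$ its complete finite-area hyperbolic structure with two cusps and one cone point of order $q$, so that $\rho$ kills the regular fiber $h$ and sends each boundary curve of $M$ to a parabolic. Consequently, for any lift $\hat\rho$ and any boundary torus $T_i$, the matrices $\hat\rho(\delta)$ with $\delta\in\pi_1(T_i)$ are simultaneously $\pm I$ or $\pm(\text{unipotent})$ and commute, so $\delta\mapsto\tfrac12\text{tr}\,\hat\rho(\delta)$ defines a homomorphism $\sigma_i^{\hat\rho}\colon\pi_1(T_i)\to\{\pm1\}$, i.e. a class in $H^1(T_i;\mathbb{Z}/2)$; in particular $\text{tr}\,\hat\rho(\gamma_i)\in\{\pm2\}$ for every simple closed curve $\gamma_i\subset T_i$.

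Next, a lift $\hat\rho_0$ of $\rho$ exists: for $M$ hyperbolic by the Thurston--Culler theorem that the holonomy of a cusped hyperbolic $3$-manifold lifts to $SL_2(\mathbb{C})$, and for $M$ Seifert over $A^2(q)$ by a direct construction from a presentation of $\pi_1(M)$ (the $H^2(M;\mathbb{Z}/2)$-obstruction vanishes since $\rho$ factors through $\pi_1^{\mathrm{orb}}(A^2(q))\cong\mathbb{Z}\ast\mathbb{Z}/q$ and the exceptional-fiber class has infinite order in $\pi_1 M$). Every lift is then $\hat\rho_\phi(g)=(-1)^{\phi(g)}\hat\rho_0(g)$ for a unique $\phi\in H^1(M;\mathbb{Z}/2)=\mathrm{Hom}(\pi_1 M,\mathbb{Z}/2)$, and $\sigma_i^{\hat\rho_\phi}=\sigma_i^{\hat\rho_0}+\phi|_{T_i}$. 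Thus $(M,\rho)$ is negative-flexible for $(\gamma_1,\dots,\gamma_m)$ exactly when there is a $\phi\in H^1(M;\mathbb{Z}/2)$ with $\phi(\gamma_i)=\epsilon_i$ for all $i$, where $\epsilon_i\in\mathbb{Z}/2$ equals $0$ when $\text{tr}\,\hat\rho_0(\gamma_i)=-2$ and $1$ when $\text{tr}\,\hat\rho_0(\gamma_i)=+2$.

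To decide solvability of these equations I would invoke ``half lives, half dies'': the image $L\subseteq\bigoplus_i H^1(T_i;\mathbb{Z}/2)$ of the restriction map from $H^1(M;\mathbb{Z}/2)$ is Lagrangian for the cup-product form, and Poincaré--Lefschetz duality on $\partial M$ identifies $L$ with $\ker\!\big(H_1(\partial M;\mathbb{Z}/2)\to H_1(M;\mathbb{Z}/2)\big)$. Evaluating on the $\gamma_i$ and dualizing then shows that the attainable tuples $(\phi(\gamma_i))_i$ are precisely those $(a_i)\in(\mathbb{Z}/2)^m$ satisfying $\sum_{i\in S}a_i=0$ whenever $\sum_{i\in S}[\gamma_i]=0$ in $H_1(M;\mathbb{Z}/2)$. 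So the whole lemma reduces to one statement about the fixed lift $\hat\rho_0$: for every $S\subseteq\{1,\dots,m\}$ with $\sum_{i\in S}[\gamma_i]=0$ in $H_1(M;\mathbb{Z}/2)$, an even number of the $\gamma_i$ with $i\in S$ have $\text{tr}\,\hat\rho_0(\gamma_i)=+2$.

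This last statement is the main obstacle. It is false for arbitrary parabolic-peripheral representations — an abelian representation of the figure-eight knot group sends the (nullhomologous) longitude to $I$, trace $+2$ — so it genuinely exploits discreteness of $\rho$; the model case is the classical fact that the figure-eight longitude has trace $-2$ under the geometric lift. I would prove it by a relative characteristic-class argument: pick a properly embedded surface $F\subset M$ with $\partial F$ homologous mod $2$ to $\bigsqcup_{i\in S}\gamma_i$ (possible since that class dies in $H_1(M;\mathbb{Z}/2)$); the flat $PSL_2(\mathbb{C})$-bundle of $\rho$ restricted to $(F,\partial F)$, together with its parabolic reductions along the $\gamma_i$, carries a relative $w_2$ whose value in $\mathbb{Z}/2$ records $\prod_{i\in S}\tfrac12\text{tr}\,\hat\rho_0(\gamma_i)$; but this obstruction vanishes because the global lift $\hat\rho_0$ furnishes an $SL_2(\mathbb{C})$-reduction over all of $M$. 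Hence $\prod_{i\in S}\tfrac12\text{tr}\,\hat\rho_0(\gamma_i)=1$, as required. In the Seifert case this step is elementary: since $\rho$ kills $h$ and factors through $\mathbb{Z}\ast\mathbb{Z}/q$, both the signs $\sigma_i^{\hat\rho_0}(\gamma_i)$ and the $\mathbb{Z}/2$-homological relations among the $[\gamma_i]$ can be read straight off a presentation of $\pi_1(M)$. Combining these steps produces a lift $\hat\rho$ with $\text{tr}\,\hat\rho(\gamma_i)=-2$ for every $i$, which is negative-flexibility.
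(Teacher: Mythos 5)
The paper itself gives no proof of this lemma; it is cited to \cite{Wang}, and the closest in-paper analogue is the proof of Lemma~\ref{lma:disknegative}, which works by writing down the holonomy explicitly, identifying the one peripheral curve whose lifted trace-sign is forced, and then invoking half-lives-half-dies to see that this curve generates the kernel of the boundary map in $\mathbb{Z}/2$-homology. Your reduction is a cleaner, more abstract packaging of that same idea: parabolicity gives a sign-homomorphism $\sigma_i^{\hat\rho}\in H^1(T_i;\mathbb{Z}/2)$, lifts form a torsor over $H^1(M;\mathbb{Z}/2)$, and the attainable sign patterns are the annihilator of $\ker\bigl(H_1(\partial M;\mathbb{Z}/2)\to H_1(M;\mathbb{Z}/2)\bigr)$. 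That framing is correct, and it rightly isolates the geometric input.

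However, the geometric input is where the argument breaks. You correctly state the key claim as: for every $S$ with $\sum_{i\in S}[\gamma_i]=0$ in $H_1(M;\mathbb{Z}/2)$, an \emph{even} number of the $\gamma_i$, $i\in S$, have $\mathrm{tr}\,\hat\rho_0(\gamma_i)=+2$. But your $w_2$ argument concludes ``$\prod_{i\in S}\tfrac12\mathrm{tr}\,\hat\rho_0(\gamma_i)=1$,'' and these two statements disagree whenever $|S|$ is odd: $\prod_{i\in S}\tfrac12\mathrm{tr}\,\hat\rho_0(\gamma_i)=(-1)^{|S|-\#\{+2\text{'s}\}}$, so ``even number of $+2$'s'' forces the product to be $(-1)^{|S|}$, not $1$. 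Your own model example exposes the error: for the figure-eight longitude, $S$ is a singleton, the trace is $-2$, and the product is $-1$, directly contradicting your conclusion. The source of the error is that the obstruction to lifting $\rho|_F$ over a surface $F$ with free fundamental group vanishes trivially, so ``$w_2$ of the global lift vanishes'' carries no information; what one actually needs is a \emph{relative} Euler-class parity identity of the form $\#\{-2\text{'s on }\partial F\}\equiv \chi(F)\pmod 2$ (or equivalent), in which the Euler characteristic of $F$ must appear. Without that correction term the argument proves the wrong statement, so the key geometric fact is not established. The paper/\cite{Wang} route sidesteps this by computing the holonomy explicitly (as in Lemma~\ref{lma:disknegative}), or, for the hyperbolic case, by appealing to the Menal-Ferrer--Porti argument mentioned in the remark following Lemma~\ref{lma:disknegative}.
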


We now prove similar lemmas for two more types of spaces.

\begin{lemma}\label{lma:disknegative}
Suppose $M$ is Seifert-fibered over $D(p, q)$ with $p \geq 2, q > 2$. Let $\rho$ be the holonomy representation of $D(p, q)$. Then $(M, \rho)$ is negative-flexible.
\end{lemma}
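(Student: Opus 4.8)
\emph{Proof strategy.} Since $p\ge 2$ and $q>2$ we have $\tfrac1p+\tfrac1q<1$, so $D(p,q)$ is a hyperbolic $2$-orbifold and $\rho$ is defined; pulling it back along the quotient $\pi_1(M)\twoheadrightarrow \pi_1^{\mathrm{orb}}(D(p,q))=\langle x_1,x_2\mid x_1^{p}=x_2^{q}=1\rangle$ (which kills the regular fiber $h$), $\rho$ sends $x_1,x_2$ to elliptics of orders $p,q$ and the peripheral element $d:=(x_1x_2)^{-1}$ to a parabolic. I would fix a Seifert presentation $\pi_1(M)=\langle x_1,x_2,h\mid h\text{ central},\ x_1^{p}=h^{\alpha_1},\ x_2^{q}=h^{\alpha_2}\rangle$, with $M$ having a single boundary torus and $\pi_1(\partial M)=\langle d,h\rangle\cong\mathbb Z^{2}$, where $d=(x_1x_2)^{-1}$. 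Every essential simple closed curve on $\partial M$ is then $\gamma=d^{u}h^{v}$ with $\gcd(u,v)=1$, and the task is to produce, for each such pair, an $SL_2(\mathbb C)$ lift $\hat\rho$ of $\rho$ with $\operatorname{tr}\hat\rho(\gamma)=-2$.

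Next I would classify the lifts. Up to conjugacy a lift has $\hat\rho(x_i)=s_iR_i$ with $R_i\in SL_2(\mathbb R)$ the standard rotation by $2\pi/p_i$ (so $\operatorname{tr}R_i=2\cos(\pi/p_i)$ and $R_i^{p_i}=-I$), $s_i\in\{\pm1\}$, and $\hat\rho(h)=\eta I$ with $\eta\in\{\pm1\}$; the relations force $s_i^{p_i}=-\eta^{\alpha_i}$ (in particular $p_i$ even forces $\alpha_i$ odd and $\eta=-1$, which is exactly the condition for $\rho$ to lift at all). The key input is that $\operatorname{tr}\hat\rho(x_1x_2)=-2$ for the canonical lifts — a standard sign computation for products of canonical elliptic lifts in $SL_2(\mathbb R)$, exemplified by $\operatorname{tr}(S\cdot ST)=-2$ in $PSL_2(\mathbb Z)$ — so that $\operatorname{tr}\hat\rho(d)=\operatorname{tr}(s_1s_2R_1R_2)=-2s_1s_2$. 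Since $\hat\rho(h)$ is a central scalar and $\hat\rho(d)$ is $\pm$(a nontrivial unipotent), this yields the clean formula $\operatorname{tr}\hat\rho(\gamma)=2(-s_1s_2)^{u}\eta^{v}$.

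Now the set of lifts is a torsor over $H^{1}(M;\mathbb Z/2)=\operatorname{Hom}(H_1(M),\mathbb Z/2)$, a class $\phi$ acting by $\hat\rho\mapsto\phi\cdot\hat\rho$ and multiplying $\operatorname{tr}\hat\rho(\gamma)$ by $\phi(\gamma)$. So if $\gamma$ is not null-homologous mod $2$ in $M$ we can flip the sign and we are done; it remains only to show that if $\gamma$ \emph{is} null-homologous mod $2$ then automatically $\operatorname{tr}\hat\rho(\gamma)=-2$. For this I would compute $H_1(M;\mathbb Z/2)=(\mathbb Z/2)^{3}/\langle(p,0,\alpha_1),(0,q,\alpha_2)\rangle$, split into the four cases according to the parities of $p$ and $q$, read off in each case exactly which $(u,v)$ make $\gamma$ null-homologous mod $2$, and observe that the constraint $\gcd(u,v)=1$ eliminates the single potentially bad sub-case (the one forcing $u$ and $v$ both even), so that the formula $2(-s_1s_2)^{u}\eta^{v}$ is pinned to $-2$ there.

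I expect the real work to lie in this final step: coordinating the three sign parameters $s_1,s_2,\eta$ together with their couplings through $\alpha_1,\alpha_2$, the sign $\operatorname{tr}\hat\rho(x_1x_2)=-2$, and the mod-$2$ homology of $M$, so as to dispatch the null-homologous case uniformly across the parity cases. The hypothesis $q>2$ enters precisely in guaranteeing that $D(p,q)$ is hyperbolic, so that the holonomy $\rho$ — and hence the whole argument — exists; this is what distinguishes the present situation from the excluded Euclidean case $p=q=2$, i.e. the twisted $I$-bundle over the Klein bottle, which is treated separately.
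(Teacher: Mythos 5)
Your proposal is correct and takes essentially the paper's approach: both arguments rest on the sign rule for products of canonical elliptic lifts (so that the peripheral element lifts with trace $-2$ exactly when the singular-fiber lifts are coherent), on the $H^{1}(M;\mathbb{Z}/2)$-torsor structure on lifts, on the half-lives–half-dies theorem to identify the unique $\mathbb{Z}/2$-null-homologous peripheral slope as the one with pinned trace, and on a parity case analysis on $p$, $q$ and the Seifert invariants. The differences are purely in bookkeeping — the paper presents the base orbifold group as $\langle a,b,c\mid c=ab\rangle$ with $a$ peripheral and tracks signs by deforming from the thrice-punctured sphere, while you use the Seifert presentation with $d=(x_1x_2)^{-1}$ and encode lifts via $(s_1,s_2,\eta)$ with $s_i^{p_i}=-\eta^{\alpha_i}$ — and the final parity casework you defer (``I expect the real work to lie in this final step'') is precisely what the paper spends its proof on: one case when $p$ or $q$ is even, and four subcases when both are odd, each of which your formula $\operatorname{tr}\hat\rho(d^{u}h^{v})=2(-s_1s_2)^{u}\eta^{v}$ does close.
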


\begin{proof}
The holonomy representation of the thrice-punctured sphere, whose fundamental group is given by $\langle a, b, c \mid c = ab \rangle$, is 
\begin{equation}
	\rho(a) = \left[\pm\begin{pmatrix}1 & 2 \\ 0 & 1\end{pmatrix}\right] \ \ \ \ \ \rho(b) = \left[\pm\begin{pmatrix}1&0\\-2&1\end{pmatrix}\right] \ \ \ \ \ \rho(c) = \left[\pm\begin{pmatrix}-3&2\\-2&1\end{pmatrix}\right]
\end{equation}
In particular, for any lift $\hat{\rho}$ of this representation to $SL_2(\mathbb{C})$, if the signs of $\hat{\rho}(a)$ and $\hat{\rho}(b)$ are the same, then the sign of $\hat{\rho}(c)$ is negative, and if the traces of $\hat{\rho}(a)$ and $\hat{\rho}(b)$ are different, then the sign of $\hat{\rho}(c)$ is positive. The holonomy representation of $D^2(p, q)$, $p \geq 2, q > 2$ is thus given by 
\begin{equation}
	\rho(a) = \left[\pm \begin{pmatrix}1 & 2 \\ 0 & 1\end{pmatrix}\right] \ \ \ \ \ \rho(b) = \left[\pm\begin{pmatrix}\xi_{2p}&0\\x_q&\xi_{2p}^{-1}\end{pmatrix}\right] \ \ \ \ \ \rho(c) = \left[\pm\begin{pmatrix}\xi_{2p} + 2x_q & 2\xi_{2p}^{-1} \\ x_q & \xi_{2p}^{-1}\end{pmatrix}\right]
\end{equation}
where
\begin{equation}
	\xi_{2p} + \xi_{2p}^{-1} + 2x_q = \xi_{2q} + \xi_{2q}^{-1}
\end{equation}
Since the holonomy of $D^2(p, q)$ is a continuous deformation of the holonomy of the thrice-punctured sphere, it follows that the signs of the traces of a lift of $a$ and $b$ are equal if and only if $c$ lifts to a negative trace, as is true for the thrice-punctured sphere. We will show that in all cases, the $\mathbb{Z}/2\mathbb{Z}$-homologically trivial element of $\pi_1(\partial M)$, i.e. the one with trace sign fixed in all lifts, must lift to a negative trace. Let $h$ be the regular fiber. 

\medskip

\underline{Case 1: Either $p$ or $q$ are even.} Without loss of generality, let $q$ be even. In this case, $c^qh^r = 1$ for $r$ odd. Since $q$ is even, given any lift $\hat{\rho}: \pi_1(M) \to SL_2(\mathbb{C})$, $\text{tr}(\hat{\rho}(c^q)) = -2$. By Lemma 3.1 in \cite{Kitano1994ReidemeisterTO}, for all lifts, $\hat{\rho}$, $\hat{\rho}(h) = \pm I$. Since $r$ is odd and $c^qh^r = 1$, this means that $\text{tr}(\hat{\rho}(h)) = -2$. Since $\text{tr}(\hat{\rho}(h)) = -2$ for all lifts $\hat{\rho}$, this means that $[h] \in H^1(M; \mathbb{Z}/2\mathbb{Z})$ is trivial. By the half-lives, half-dies theorem, $[h]$ must generate the kernel of the induced map $H^1(\partial M; \mathbb{Z}/2\mathbb{Z}) \to H^1(M; \mathbb{Z}/2\mathbb{Z})$. Since $\text{tr}(\hat{\rho}(h)) = -2$, this shows that $(M, \rho)$ is negative-flexible in this case.

\medskip

\underline{Case 2: $p$ and $q$ are both odd.} In this case, $b^ph^{r_1} = c^qh^{r_2} = 1$. 

\smallskip

\underline{Subcase 2.1}: $r_1$ and $r_2$ are both even. In this case, for all lifts $\hat{\rho}$, $\hat{\rho}(h^{r_i}) = I$, $i = 1, 2$. This means that $\text{tr}(\hat{\rho}(b^p)) = \text{tr}(\hat{\rho}(c^q)) = 2$, and since $p$ and $q$ are both odd, this means that $\text{tr}(\hat{\rho}(b)) = -(\xi_{2p} + \xi_{2p}^{-1})$ and $\text{tr}(\hat{\rho}(c)) = -(\xi_{2q} + \xi_{2q}^{-1})$. Since $c$ lifts to a negative trace, the signs of the traces of $a$ and $b$ are equal. Thus $\text{tr}(\hat{\rho}(a)) = -2$ for all lifts $\hat{\rho}$, and as in the previous case, $[a]$ generates the kernel of the boundary-induced $\mathbb{Z}/2\mathbb{Z}$-homology map. Thus $(M, \rho)$ is negative-flexible. 

\smallskip

\underline{Subcase 2.2}: $r_1$ is even and $r_2$ is odd. Then for all lifts $\hat{\rho}$, $\hat{\rho}(h^{r_1}) = 1$, and so $\text{tr}(\hat{\rho}(b^p)) = 2$, hence $\text{tr}(\hat{\rho}(b)) = -(\xi_{2p} + \xi_{2p}^{-1})$. In addition, if $\text{tr}(\hat{\rho}(c)) = -(\xi_{2q} + \xi_{2q}^{-1})$, then $\text{tr}(\hat{\rho}(c^q)) = 2$ and thus $\text{tr}(\hat{\rho}(h^{r_1})) = \text{tr}(\hat{\rho}(h)) = 2$. If $\text{tr}(\hat{\rho}(c)) = \xi_{2q} + \xi_{2q}^{-1}$, then $\text{tr}(\hat{\rho}(c^q)) = -2$ and $\text{tr}(\hat{\rho}(h^{r_1})) = \text{tr}(\hat{\rho}(h)) = -2$, i.e. $c$ and $h$ have opposite sign traces under every lift. In particular, if $\text{tr}(\hat{\rho}(c))$ is negative, then $\text{tr}(\hat{\rho}(a))$ and $\text{tr}(\hat{\rho}(b))$ have the same sign. It follows that $\text{tr}(\hat{\rho}(a))$ is negative while $\text{tr}(\hat{\rho}(h))$ is positive. If $\text{tr}(\hat{\rho}(c))$ is positive , then $\text{tr}(\hat{\rho}(a))$ and $\text{tr}(\hat{\rho}(b))$ have opposite signs. It follows that $\text{tr}(\hat{\rho}(a))$ is positive and $\text{tr}(\hat{\rho}(h))$ is negative. In either case, $a$ and $h$ have negative traces, so $ah$ always has a negative trace. Thus $[ah]$ generates the kernel of the boundary-induced $\mathbb{Z}/2\mathbb{Z}$-homology map. Thus $(M, \rho)$ is negative-flexible. 

\smallskip

\underline{Subcase 2.3}: $r_1$ is odd and $r_2$ is even. A symmetric argument to the previous case shows that $(M, \rho)$ is also negative-flexible in this case. 

\smallskip

\underline{Subcase 2.4}: $r_1$ and $r_2$ are odd. Then by the argument involving $\text{tr}(\hat{\rho}(c))$ in Subcase 2.2, both $\text{tr}(\hat{\rho}(c))$ and $\text{tr}(\hat{\rho}(b))$ have the opposite sign trace as $\text{tr}(\hat{\rho}(h))$; this means that $\text{tr}(\hat{\rho}(c))$ and $\text{tr}(\hat{\rho}(b))$ have the same sign. If $\text{tr}(\hat{\rho}(c))$ is negative, then the traces of $a$ and $b$ under the same lift are equal. But since $\text{tr}(\hat{\rho}(b))$ is also negative, $\text{tr}(\hat{\rho}(a))$ is negative as well. If $\text{tr}(\hat{\rho}(c))$ is positive, then the traces of $a$ and $b$ under that lift are different. But since $\text{tr}(\hat{\rho}(b)$ is also positive, it follows that $\text{tr}(\hat{\rho}(a))$ is once again negative. Thus $a$ has negative trace for all lifts, and jence $[a]$ generates the kernel of the boundary-induced $\mathbb{Z}/2\mathbb{Z}$-homology map. Thus $(M, \rho)$ is negative-flexible. 
\end{proof}

\begin{remark}
In light of Lemmas \ref{lma:disknegative} and \ref{lma:hyperbolicandannulus}, a reasonable conjecture is the following. For any 3-manifold $M$ Seifert-fibered over a hyperbolic 2-orbifold $\mathcal{O}$, $p: M \to \mathcal{O}$ the fibering map, and $\rho$ the holonomy representation of $\mathcal{O}$, $(M, \rho \circ p)$ is negative-flexible. A proof sketch mimics the proof for negative-flexibility of hyperbolic 3-manifolds in Lemma 3.9 of \cite{MenalFerrer2010TwistedCF}, though there are many details left to be filled in. 
\end{remark}

\begin{lemma}\label{lma:ibundlenegative}
Suppose $M$ is the twisted $I$-bundle over the Klein bottle $\tilde{K}$. Let $\rho_A, \rho_B, \rho_C: \pi_1(\tilde{K})$ be as in Lemma \ref{lma:twistedbundle}. Then $(M, \rho_A)$ and $(M, \rho_B)$ are negative-flexible, while $(M, \rho_C)$ is not negative-flexible. 
\end{lemma}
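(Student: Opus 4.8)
The plan is to analyze each of the three representation types from Lemma \ref{lma:twistedbundle} directly, using the presentation $\pi_1(\tilde{K}) = \langle s, t \mid s^2t^2 = 1\rangle$ with peripheral subgroup generated by the meridian $st$ and longitude $s^2$. Since $\tilde{K}$ has a single torus boundary component, negative-flexibility for $(\tilde{K}, \rho)$ amounts to showing that for each simple nontrivial $\gamma \in \pi_1(\partial \tilde{K})$ there is a lift $\hat{\rho}$ with $\mathrm{tr}(\hat\rho(\gamma)) = -2$. The simple closed curves on the boundary torus are, up to isotopy, the classes $(st)^a (s^2)^b$ with $\gcd(a,b)=1$; in $SL_2(\mathbb{C})$ terms I need to control the trace of $\hat\rho((st)^a(s^2)^b)$ as the lift varies over the four sign choices (the two signs on $\rho(s)$ and the two signs on $\rho(t)$, though the relation $s^2t^2=1$ couples them).

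First I would handle $\rho_A$ and $\rho_B$. For $\rho_A$, compute that $\hat\rho_A(st)$ has trace $\pm(r-2)$ — wait, more precisely using the displayed matrices one finds $\hat\rho_A(st) = \begin{pmatrix} r-1 & \mp i \\ \pm ri & -1 \end{pmatrix}$ up to the overall sign ambiguity, so $\mathrm{tr}(\hat\rho_A(st)) = \pm(r-2)$, and $\hat\rho_A(s^2) = \pm I$ has fixed trace $-2$ in suitable lifts since $s^2 = t^{-2}$ and the order-2 behavior is forced. The key point is that $\hat\rho_A(st)$ is a nonparabolic, non-central element whose trace can be made to avoid $\pm 2$, and more importantly I can choose the free parameter $r$ (for $\rho_A$, $r=4$ is fixed, but for $\rho_B$, $r$ is free) together with the sign of the lift so that $\hat\rho((st)^a(s^2)^b)$ has trace $-2$. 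For $\rho_B$ in particular, both $\rho_B(s)$ and $\rho_B(t)$ are upper-triangular with the same diagonal signs $\pm i$, so $\hat\rho_B(s^2)$ and $\hat\rho_B(t^2)$ are unipotent-times-$(-I)$; here the longitude $s^2$ lifts to trace $-2$ automatically, and the meridian $st$ is upper triangular with diagonal $(-1,-1)$, hence trace $-2$ as well — so every boundary word $(st)^a(s^2)^b$ lifts to an upper-triangular matrix with diagonal $(-1)^{a+b}$ times a unipotent factor, and by adjusting which square root of $-I$ we take on $s$ versus $t$ (the relation permits the four combinations of signs, two of which are genuinely distinct lifts) we get trace $-2$ for the required parity. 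I expect $\rho_A$ to take slightly more care because $\rho_A(t)$ is lower-triangular while $\rho_A(s)$ is upper-triangular, so the boundary words are genuinely not triangular and I must track an actual $2\times 2$ product; the saving grace is that $\mathrm{tr}(st) = -2$ and $\mathrm{tr}(s^2) = -2$ already in the preferred lift, and one shows inductively (via the trace recursion $\mathrm{tr}(w \cdot g) = \mathrm{tr}(w)\mathrm{tr}(g) - \mathrm{tr}(wg^{-1})$ together with $\mathrm{tr}(st) = \mathrm{tr}((st)^{-1}) = -2$, $\mathrm{tr}(s^2)=-2$) that the whole abelian peripheral subgroup lifts to elements of trace $-2$, equivalently that the boundary holonomy lands in $-\mathrm{(unipotent)}$.

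The remaining case, showing $(\tilde{K}, \rho_C)$ is \emph{not} negative-flexible, is where I'd expect the real content. Here $\rho_C(s) = \pm\begin{pmatrix}1&1\\0&1\end{pmatrix}$ and $\rho_C(t) = \pm\begin{pmatrix}1&-1\\0&1\end{pmatrix}$, so $\rho_C(s^2) = \pm\begin{pmatrix}1&2\\0&1\end{pmatrix}$ and $\rho_C(st) = \pm I$: the meridian $st$ is sent to $\pm I$. The point is that $st$ is a simple nontrivial boundary curve, and in \emph{every} lift $\hat\rho_C(st) = \pm I$ has trace $\pm 2$, never $-2$ in a way we can freely arrange... actually $\hat\rho_C(st) = -I$ is achievable and has trace $-2$, so the obstruction must come from a different simple curve — namely, I'd show that the longitude $s^2$ (or some $(st)^a(s^2)^b$) has trace $+2$ forced in every lift: $\hat\rho_C(s^2)$ is unipotent with diagonal entries $(\pm 1)^2 = 1$, so $\mathrm{tr}(\hat\rho_C(s^2)) = 2$ for all four lifts, and $s^2$ is a simple nontrivial peripheral element. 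Hence no lift sends $s^2$ to trace $-2$, so $\rho_C$ fails negative-flexibility by definition. The main obstacle is bookkeeping the sign/lift combinations consistently with the relation $s^2 t^2 = 1$ — i.e. verifying that the "four lifts" are exactly the lifts of each $\rho$ and that I have not missed a lift that could rescue $\rho_C$ — and making sure the curves I use ($st$ for the positive cases, $s^2$ for the negative case) are genuinely simple closed curves on $\partial\tilde{K}$, which follows from their being a basis of $\pi_1(\partial\tilde{K}) \cong \mathbb{Z}^2$. Once those checks are in place, the lemma follows by assembling the three cases.
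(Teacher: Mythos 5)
Your argument is essentially the paper's: compute the boundary holonomy of each representation and observe that $\hat\rho_A(s^2)=\hat\rho_B(s^2)=-I$ in every lift, while $\hat\rho_C(s^2)$ has trace $+2$ in every lift, which is exactly the obstruction for $\rho_C$. Two small corrections are needed in your $\rho_A$ discussion. First, $\hat\rho_A(st)$ is $\pm(\text{parabolic})$, not nonparabolic: its trace is $\pm(r-2)=\pm2$ with $r=4$. Second, and more substantively, your claim that ``the whole abelian peripheral subgroup lifts to elements of trace $-2$'' under a fixed lift is false: in the lift where $\hat\rho_A(st)$ has trace $-2$, the product $(st)(s^2)$ maps to $\hat\rho_A(st)\cdot(-I)$, which has trace $+2$. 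Negative-flexibility only requires, for each fixed primitive $\gamma=(st)^a(s^2)^b$, the existence of \emph{some} lift with $\mathrm{tr}(\hat\rho(\gamma))=-2$, and the correct lift depends on the parities of $a$ and $b$: since $\hat\rho(s^2)=-I$ is independent of the lift and $\mathrm{tr}(\hat\rho(st))$ can be toggled between $+2$ and $-2$, one gets $\mathrm{tr}\bigl(\hat\rho((st)^a(s^2)^b)\bigr)=(\pm1)^a(-1)^b\cdot 2$, and $\gcd(a,b)=1$ guarantees a sign choice giving $-2$. With these fixes your conclusion stands and agrees with the paper.
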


\begin{proof}
Recall that 
\begin{equation}
	\rho_A(s) = \begin{pmatrix}\pm i&1\\0&\mp i\end{pmatrix} \ \ \ \ \ \rho_A(t) = \begin{pmatrix}\pm i&0\\4&\mp i\end{pmatrix}
\end{equation}
\begin{equation}
	\rho_B(s) = \begin{pmatrix}\pm i&1\\0&\mp i\end{pmatrix} \ \ \ \ \ \rho_B(t) = \begin{pmatrix}\pm i&r\\0&\mp i\end{pmatrix}
\end{equation}
\begin{equation}
	\rho_C(s) = \begin{pmatrix}\pm1&1\\0&\pm1\end{pmatrix} \ \ \ \ \ \rho_C(t) = \begin{pmatrix}\mp1&\pm1\\0&\mp 1\end{pmatrix}
\end{equation}
The behavior of the representations on the boundary torus is as follows:
\begin{equation}
	\rho_A(st) = \begin{pmatrix}3&\mp i\\\mp 4i&-1\end{pmatrix} \ \ \ \ \ \rho_A(s^2) = \begin{pmatrix}-1&0\\0&-1\end{pmatrix}
\end{equation}
\begin{equation}
	\rho_B(st) = \pm\begin{pmatrix}-1&\pm i(r-1)\\0&-1\end{pmatrix} \ \ \ \ \ \rho_B(s^2) = \begin{pmatrix}-1&0\\0&-1\end{pmatrix}
\end{equation}
\begin{equation}
	\rho_C(st) = \begin{pmatrix}\pm1&0\\0&\pm1\end{pmatrix} \ \ \ \ \ \rho_C(s^2) = \begin{pmatrix}1&\pm 2\\0&1\end{pmatrix}
\end{equation}
Notice that $\rho_A(s^2)$ and $\rho_B(s^2)$ are fixed at $-I$, while $\rho_C(s^2)$ has trace 2. This proves the lemma. 
\end{proof}

The proof of the following theorem largely mimics the proof of Theorem 3.2 in \cite{Wang}.

\begin{theorem}\label{thm:compatible}
Let $M$ be a closed irreducible 3-manifold with JSJ decomposition with JSJ tori $\{T_i\}_{i=1}^{n-1}$, gluing homomorphisms $\{\varphi_i: \partial M_i \to \partial M_{i+1}\}_{i=1}^{n-1}$, and complementary regions $\{M_i\}_{i=1}^n$, such that:
\begin{enumerate}
	\item The JSJ graph is a line, with endpoints $M_1, M_n$. 
	\item $M_1, M_n$ are either Seifert-fibered over $D(p, q)$ with $p \geq 2, q > 2$ or the twisted $I$-bundle over the Klein bottle.
	\item The other $M_i$ are either hyperbolic or cable spaces.
	\item If $n = 2$, then $M_1$ and $M_2$ cannot both be the twisted $I$-bundle over the Klein bottle.
\end{enumerate}
Then the JSJ decomposition of $M$ is $SL_2(\mathbb{C})$-compatible. Moreover, there exist $SL_2(\mathbb{C})$-compatible choices $\rho_i$ of representations on the $\pi_1(M_i)$ such that if $M_1$ or $M_n$ is the twisted $I$-bundle over the Klein bottle, $\rho_i$ is one of $\rho_A$ or $\rho_B$ from Lemma \ref{lma:twistedbundle}. 
\end{theorem}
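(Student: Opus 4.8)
The plan is to induct along the line-shaped JSJ graph, building up a globally $SL_2(\mathbb{C})$-compatible family of representations one complementary region at a time, using negative-flexibility to absorb whatever sign constraint the previous gluing imposes. First I would fix, for each $M_i$, a base representation $\rho_i^{\circ}$: the holonomy representation if $M_i$ is hyperbolic or a cable space (which is Seifert-fibered over $A^2(q)$, so Lemma~\ref{lma:hyperbolicandannulus} applies), the holonomy representation pulled back from $D(p,q)$ if $M_i$ is Seifert-fibered over $D(p,q)$ with $p\ge 2, q>2$ (Lemma~\ref{lma:disknegative}), and one of $\rho_A, \rho_B$ from Lemma~\ref{lma:twistedbundle} if $M_i = \tilde{K}$ (Lemma~\ref{lma:ibundlenegative}). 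The content of Lemmas~\ref{lma:hyperbolicandannulus}--\ref{lma:ibundlenegative} is exactly that each such $(M_i,\rho_i^{\circ})$ is negative-flexible, so for any prescribed choice of simple closed curves on the boundary tori of $M_i$ we may pass to a lift making all of them have trace $-2$.

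Next I would run the induction. Start at $M_1$: since it is either Seifert-fibered over $D(p,q)$ or $\tilde{K}$, it has a single torus boundary component $\partial M_1$, glued via $\varphi_1$ to one of the two boundary tori of $M_2$. Pick the curve $\gamma \subset \partial M_1$ to be (the image of) a simple closed curve and lift $\rho_1^\circ$ so that $\mathrm{tr}(\rho_1(\gamma)) = -2$; then lift $\rho_2^\circ$ so that $\mathrm{tr}(\rho_2(\varphi_1(\gamma))) = -2$ and simultaneously (using the full strength of negative-flexibility applied to \emph{both} boundary tori of $M_2$) so that a chosen simple curve on the \emph{other} boundary torus of $M_2$ also has trace $-2$. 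Continue: at stage $i$, the gluing $\varphi_{i-1}$ has already fixed the lift of $\rho_i^\circ$ on one boundary torus; negative-flexibility of $(M_i,\rho_i^\circ)$ lets us keep that while also normalizing the free boundary torus, and then we lift $\rho_{i+1}^\circ$ to match across $\varphi_i$. Because the graph is a line there is no cycle to close up, so no global obstruction arises — this is precisely the simplification noted in the remark after Lemma~\ref{lma:jsjstructure} relative to the circular case in \cite{Wang}. Since in $SL_2(\mathbb{C})$ a torus representation is determined up to sign by the traces of a basis and we have matched the distinguished curve's trace to be $-2$ on both sides of every $T_i$, the representations agree on $\pi_1(T_i)$ up to conjugacy, which is the definition of $SL_2(\mathbb{C})$-compatibility. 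The "moreover" clause is automatic from the construction, since we only ever chose $\rho_A$ or $\rho_B$ at the ends.

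The one genuinely delicate point is the hypothesis that $M_1, M_n$ cannot both be $\tilde{K}$ when $n = 2$, and more generally the role of condition (4) in the main theorem. By Lemma~\ref{lma:ibundlenegative}, the representations $\rho_A$ and $\rho_B$ of $\tilde{K}$ have $\mathrm{tr}(\rho(s^2))$ \emph{fixed} at $-2$ on the boundary torus — there is no freedom there, only on the $st$ slope. So when $n = 2$ and both pieces are $\tilde{K}$, the gluing $\varphi_1: \partial M_1 \to \partial M_2$ must carry a slope on which the trace is flexible to a slope on which it is flexible; if $\varphi_1$ sends the homologically-trivial (fixed-trace) slope of one $\tilde{K}$ to the flexible slope of the other, there may be no compatible lift, which is why that case is excluded. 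For $n \ge 3$, an end piece $\tilde{K}$ glues to a middle piece which is hyperbolic or a cable space, both of which are fully negative-flexible on every boundary torus (Lemma~\ref{lma:hyperbolicandannulus}), so the middle piece can always absorb whatever fixed slope $\tilde{K}$ hands it; thus the induction goes through and the only obstruction is the $n=2$ double-$\tilde{K}$ configuration, already ruled out by (4). I would therefore expect the bulk of the writeup to be the careful bookkeeping in the inductive step — tracking which boundary torus of $M_i$ is "incoming" versus "outgoing", and verifying that the trace-matching of one slope plus the orientation/determinant conventions force agreement of the full boundary characters — while the conceptual heart is the case analysis for $\tilde{K}$ just described.
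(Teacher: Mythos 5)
Your proposal has the right conceptual skeleton — negative-flexibility of each piece plus the line-graph structure is indeed what drives the argument — but the inductive step contains a genuine gap that the paper's proof handles differently.

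The gap is in the claim that ``we have matched the distinguished curve's trace to be $-2$ on both sides of every $T_i$, [so] the representations agree on $\pi_1(T_i)$.'' Matching the trace of a \emph{single} slope on $T_i$ does not determine the trace function on $\pi_1(T_i) \cong \mathbb{Z}^2$: on a torus where the holonomy is parabolic, the trace of $m^p\ell^q$ is $\pm 2$ and the sign is governed by an element of $H^1(T_i;\mathbb{Z}/2\mathbb{Z}) \cong (\mathbb{Z}/2\mathbb{Z})^2$, so you must match a full sign pattern, not one curve. Negative-flexibility only asserts that for any \emph{one} chosen slope per boundary torus there exists a lift making them all $-2$; it does not say you can fix the entire incoming sign pattern on one boundary torus of a middle piece $M_i$ and still retain freedom on the outgoing torus. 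Whether you can do so depends on the shape of the Lagrangian subspace $\ker\bigl(H_1(\partial M_i;\mathbb{Z}/2\mathbb{Z}) \to H_1(M_i;\mathbb{Z}/2\mathbb{Z})\bigr) \subset (\mathbb{Z}/2\mathbb{Z})^4$: whether it is a product $L_1 \oplus L_2$ (in which case the two boundary tori decouple) or the graph of a symplectic isomorphism (in which case the outgoing pattern is forced by the incoming one and there is no slack to ``normalize the free boundary torus''). The paper calls these Type~1 and Type~3, pulls the classification from the proof of Theorem~3.2 in \cite{Wang}, and then carries out an explicit case analysis ($n=2$, all Type~1, all Type~3, mixed) showing that the locked classes $\ell_1, \ell_n$ at the two ends can always be propagated consistently. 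Your closing remark that ``orientation/determinant conventions force agreement of the full boundary characters'' acknowledges the issue but does not fill it; without the Type~1/Type~3 dichotomy there is no argument that the incoming affine constraint lands in the projection of $M_i$'s available sign-pattern coset, nor that the resulting outgoing constraint intersects $M_{i+1}$'s.

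Two smaller points. First, ``agree on $\pi_1(T_i)$ up to conjugacy'' is not what $SL_2(\mathbb{C})$-compatibility demands — the restrictions generally are \emph{not} conjugate (e.g. a hyperbolic piece and a Seifert-fibered piece have different cusp shapes on the shared torus); what must match is the trace function, i.e. the sign pattern. Second, your explanation of why condition (4) is needed — that a gluing sending the fixed slope $s^2$ of one $\tilde K$ to the flexible slope $st$ of the other might have no compatible lift — is plausible as a heuristic, but the paper does not actually invoke condition (4) in this proof, and a direct check of the two $2$-element cosets in $(\mathbb{Z}/2\mathbb{Z})^2$ shows they always intersect, so the obstruction you describe does not in fact arise at the level of $SL_2(\mathbb{C})$-compatibility; condition (4) is there for the case analysis in the proof of the main theorem (which has no case for a torus bounding two copies of $\tilde K$), not for Theorem~\ref{thm:compatible}.
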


\begin{proof}
Take any $M_i$ that is not the twisted $I$-bundle over the Klein bottle, and let $\rho_i$ be the associated hyperbolic holonomy representation. If $M_i$ is the twisted $I$-bundle of the Klein bottle, then let $\rho_i$ be either $\rho_A$ or $\rho_B$ from Lemma \ref{lma:twistedbundle}. By Lemmas \ref{lma:hyperbolicandannulus}, \ref{lma:disknegative}, and \ref{lma:ibundlenegative}, all the $(M_i, \rho_i)$ are negative-flexible. 

\medskip

We know that the $M_i$, $i \neq 1, n$ (i.e. the JSJ components with two torus boundaries) can be categorized into two types: Type 1 and Type 3 (see \cite{Wang}, proof of Theorem 3.2). By the ``half-lives, half-dies" theorem (Lemma 5.3 in \cite{Schleimer2018IntroductionTT}), $\iota: \partial M_i \to M_i$ is the boundary inclusion, then $\ker(\iota_*)$ is half-dimensional in $H_1(\partial M; \mathbb{Z}/2\mathbb{Z})$. In the case where $\partial M_i$ has two torus boundary components, this means that $\ker(\iota_*) \cong (\mathbb{Z}/2\mathbb{Z})^2$ as a subspace of $(\mathbb{Z}/2\mathbb{Z})^4$. A \emph{Type 1} JSJ component is one where $\ker(\iota_*)$ is generated by two homology classes in separate torus boundary components, and a \emph{Type 3} JSJ component is one where $\ker(\iota_*)$ is generated by two homology classes both of which include homology classes from both tori. 

\medskip

For $M_i$, $i = 1, n$, by the half-lives half-dies theorem and negative-flexibility, the generator of the kernel of the induced homology map must have trace -2 for any lift $\widehat{\rho_i}$ or $\rho_i$ to $SL_2(\mathbb{C})$. We now have the following possibilities for the $M_i, i \neq 1, n$:
\begin{itemize}
	\item There are no such $M_i$, i.e. $n = 2$. In this case, two spaces with one torus boundary component are glued together. Let $\pi_1(\partial M_i) = \langle m_i, \ell_i \rangle$ where $\ell_i$ is the $\mathbb{Z}/2\mathbb{Z}$-homologically trivial element of $\pi_1(M_i)$. If $\varphi_1: \partial M_1 \to \partial M_2$ takes $[\ell_1]$ to $[\ell_2]$, then we simply make sure the other negatives match. If not, then the two $\mathbb{Z}/2\mathbb{Z}$-homology classes will be locked into trace -2. This realizes $SL_2(\mathbb{C})$-compatibility. 
	\item All the $M_i$ are of Type 1. Repeat the above argument for each $\varphi_i$. This will realize $SL_2(\mathbb{C})$-compatibility. 
	\item All the $M_i$ are of Type 3. Note that both $M_1$ and $M_n$ have $\text{tr}(\widehat{\rho_i}(\ell_i)) = -2$ for $i = 1, 2$, while the traces of the $\mathbb{Z}/2\mathbb{Z}$-homology class of $m$ and $m\ell$ must have the opposite sign under any lift. For each type 3 component, let the homology class glued to $\ell_1$ and $\ell_n$ have trace -2 under the chosen lift. This will satisfy the $SL_2(\mathbb{C})$-compatibility condition. 
	\item The $M_i$ are mixed type. Note that type 3 components can be ignored in this setting, so we are in the same situation as the case where the $M_i$ are of Type 1. So the same argument suffices. 
\end{itemize}
\end{proof}

\begin{remark}
From here on, a ``holonomy representation" for the twisted $I$-bundle over the Klein bottle is given by $\rho_A$ or $\rho_B$.
\end{remark}

\section{Proof of main theorem}\label{sec:proof}

We now prove Theorem \ref{thm:main}.

\begin{proof}[Proof of Theorem \ref{thm:main}]
	Let $M_i$ denote the complementary regions of $\overline{M \setminus \mathbb{T}}$ for $i = 1, \dots, n$, and let $\widehat{\mathbb{T}} = \bigcup_{i=1}^{n-1}\hat{T_i}$ denote the JSJ tori in $M(\beta)$. We have the restriction map 
	\begin{equation}
		r: X(M) \to \mathcal{X} = \prod X(M_i)
	\end{equation}
	By Lemma \ref{lma:jsjstructure}, $M_1, M_n$ have a single genus 2 surface as their boundaries, while any other complementary regions $M_i$ (if they exist) have a single genus 3 surface as their boundaries. Note that by Theorem 5.6 in \cite{Thurston1979TheGA}, $\dim(X(M_i)) \geq 3$ for $i = 1, n$, and $\dim(X(M_i)) \geq 6$ for any other $M_i$. Thus $\dim(\mathcal{X}) \geq 6(n-1)$. Let $V$ be the variety inside $\mathcal{X}$ defined by the following equations.
	\begin{enumerate}
		\item In viewing $M_i$ as the complementary regions of the knot complement $M$, there are $n - 1$ gluing maps between twice-punctured tori in $M_i$. For each gluing, there are seven matching equations. If the first twice-punctured torus has $\pi_1$ a free group generated by $a_1, b_1, c_1$ and the other one is a free group generated $a_2, b_2, c_2$, the equations are
		\begin{equation}
			\text{tr}(a_1) = \text{tr}(a_2) \ \ \ \ \ \text{tr}(b_1) = \text{tr}(b_2) \ \ \ \ \ \text{tr}(c_1) = \text{tr}(c_2) \ \ \ \ \ \text{tr}(a_1b_1) = \text{tr}(a_2b_2)
		\end{equation}
		\begin{equation}
			\text{tr}(a_1c_1) = \text{tr}(a_2c_2) \ \ \ \ \ \text{tr}(b_1c_1) = \text{tr}(b_2c_2) \ \ \ \ \ \text{tr}(a_1b_1c_1) = \text{tr}(a_2b_2c_2) \ \ \ \ \ \text{tr}(a_1c_1b_1) = \text{tr}(a_2c_2b_2)
		\end{equation}
		We make the following variable assignments.
		\begin{equation}
			\text{tr}(a_1) = x_1 \ \ \ \ \ \text{tr}(b_1) = x_2 \ \ \ \ \ \text{tr}(c_1) = x_3 \ \ \ \ \ \text{tr}(a_1b_1) = x_{12}
		\end{equation}
		\begin{equation}
			\text{tr}(a_1c_1) = x_{13} \ \ \ \ \ \text{tr}(b_1c_1) = x_{23} \ \ \ \ \ \text{tr}(a_1b_1c_1) = x_{123} \ \ \ \ \ \text{tr}(a_1c_1b_1) = x_{132}
		\end{equation}
		Similarly, we assign $\text{tr}(a_2) = x_1'$, and so on. We now show that only five of these equations suffice to define $V$. Suppose we have the equations
		\begin{equation}
			x_1 = x_1' \ \ \ \ \ x_2 = x_2' \ \ \ \ \ x_3 = x_3' \ \ \ \ \ x_{12} = x_{12}' \ \ \ \ \ x_{13} = x_{13}'
		\end{equation} 
		On each twice-punctured torus, since the punctures correspond to the same loop $[\beta]$ in $M$, the two punctures are known to be have equal trace. We thus have the trace equation
		\begin{equation}
			\text{tr}(bc^{-1}) = \text{tr}(aba^{-1}c^{-1})
		\end{equation} 
		which translates into the equations
		\begin{equation}
			x_2x_3 - x_3 = x_{12}x_{13} - x_1x_{123} + x_{23} \ \ \ \ \ x_2'x_3' - x_3' = x_{12}'x_{13}' - x_1'x_{123}' + x_{23}'
		\end{equation}
		Hence we can use our existing six equations to conclude that $x_{23} = x_{23}'$. By Section 5.1 in \cite{Goldman}, $(x_{123}, x_{132})$ and $(x_{123}', x_{132}')$ are pairs of solutions to quadratics with coefficients in the other six coordinates, which we already know are equal. Thus, either $(x_{123}, x_{132}) = (x_{132}', x_{123}')$ or $(x_{123}, x_{132}) = (x_{123}', x_{132}')$; each option represents one component of the variety $V$. Pick the component with $(x_{123}, x_{132}) = (x_{123}', x_{132}')$; we now have an irreducible component defined with $5(n - 1)$ equations (5 equations for $n - 1$ gluings). 
		\item The other equations defining $V$ are as follows: there are $n - 2$ complementary regions $M_i$ that have two twice-punctured tori inside the two genus 2 surface boundary components $S_1, S_2$. For each such component, there is one extra equation which dictates that the trace of the gluing annulus (i.e. the annulus coming from the knot complement) on $S_1$ is equal to the corresponding trace on $S_2$.
	\end{enumerate}
	Thus $V$ is defined by $(5n - 5) + (n - 2) = 6n - 7$ equations in $\mathcal{X}$, whose dimension is at least $6n - 6$. Hence $\dim(V) \geq 1$. We now show that $V$ is in fact one-dimensional. Notice that $\text{Im}(r) \subset V$, since any restriction of a character in $X(M)$ satisfies the gluing equations, and let $\gamma$ be the product of two punctures in $\mathbb{T}$, hence a commutator in $\pi_1(\mathbb{T})$. Let $V^{irr} \subset V$ be the Zariski-open subset of $V$ such of traces which restrict to irreducible traces on $\pi_1$. This contains the Zariski-open set where $\text{tr}(\gamma) \neq 2$. This means that $V^{irr}$ is a subset of traces that match on the boundary and are irreducible on $T$, which glue to give traces of $\pi_1(M)$ by Lemma 6 in \cite{Paoluzzi2010ConwaySA}. Hence $V^{irr} \subset \text{Im}(r)$. By assumption, $\dim(X(M(\beta))) = 0$, and so $\dim(X(M)) = 1$, and hence $\dim(\text{Im}(r)) \leq 1$. The dimension of $V^{irr}$ cannot be zero, since this would mean that $\dim(V^{irr}) < \dim(V)$ and it is Zariski open in $V$, hence $V^{irr}$ is empty, contradicting the fact that $M$ is a hyperbolic knot complement and so has a positive-dimensional character variety. So $\dim(V^{irr}) = \dim(V)$. But $\dim(V^{irr}) \leq 1$, and $\dim(V) \geq 1$, so they must both be equal to 1. This also means that $\dim(X(M_1)) = \dim(X(M_n)) = 3$ and for all other $M_i$, $\dim(X(M_i)) = 6$.
	
	\medskip
	
	Let $\mathcal{O}_i$ denote the JSJ complementary regions of $M(\beta)$ corresponding to $M_i$ with a 2-handle glued to its boundary. By Theorem \ref{thm:compatible}, $M(\beta)$ is $SL_2(\mathbb{C})$-compatible. Let $\chi_i$ be the corresponding traces realizing $SL_2(\mathbb{C})$-compatibility, and let $\chi_\infty = (\chi_1, \dots, \chi_n) \in V \setminus V^{irr}$. Since $M$ is $\beta$-rigid, $\text{tr}(\beta)$ cannot be constant near $\chi_\infty$. Any sequence approaching $\chi_\infty$ either has $\text{tr}(\gamma) = 2$ constantly on that sequence, in which case $\text{tr}(\beta) \neq 2$ and the trace of the commutator of the two punctures is not equal to 2, or $\text{tr}(\gamma) \neq 2$. In either case, the trace of a commutator is not constantly equal to 2 on the sequence, and so we must have a sequence of points $\{\chi_j\}_{j=1}^\infty \subset V^{irr}$ approaching $\chi_\infty$. Let $\{\alpha_j\} \in X(M)$ be such that $r(\alpha_j) = \chi_j$. Suppose for contradiction that up to subsequence, $\{\alpha_j\}$ converges to a character $\alpha_\infty \in X(M)$. Then $\alpha_\infty$ must be the trace of some $\rho_\infty: \pi_1(M) \to SL_2(\mathbb{C})$. Let $q_i: \pi_1(M_i) \to \pi_1(\mathcal{O}_i)$ be the quotient map induced by the gluing 2-handle. Since $r(\alpha_\infty) = \chi_\infty$, it follows that $\rho_\infty|_{\pi_1(M_i)}$ has the same trace as $\rho_i \circ q_i$. In the case where $\mathcal{O}_i$ is not the twisted $I$-bundle over the Klein bottle, this means that $\rho_i \circ q_i$ is irreducible, so $\rho_\infty|_{\pi_1(M_i)}$ is conjugate to $\rho_i \circ q_i$. In particular, $\rho_i|_{T_i} \circ q_i|_{\partial M_i}$ is the restriction of the holonomy representation to the boundary tori. When $\mathcal{O}_i$ is the twisted $I$-bundle over the Klein bottle, by the $SL_2(\mathbb{C})$-compatibility conditions from Theorem \ref{thm:compatible}, this means that $\rho_i|_{T_i} \circ q_i|_{\partial M_i} = \rho_j$ for $j = A, B$. We have the following cases for the incompressible twice-punctured torus $T_k \subset M$ that caps off to $\widehat{T_k} \subset M(\beta)$. Let $\widehat{T_{k_1}}, \widehat{T_{k_2}}$ be the corresponding boundary tori in the complementary JSJ components, with $\pi_1(\widehat{T_{k_1}}) = \langle m_1, \ell_1 \rangle, \pi_1(\widehat{T_{k_2}}) = \langle m_2, \ell_2 \rangle$.
	\begin{enumerate}
		\item $\widehat{T_k}$ bounds two Seifert-fibered spaces. In this case, $\ell_1, \ell_2$ are the regular fibers of the complementary regions, which have holonomy regions $\rho_1', \rho_2'$. Then up to conjugacy, 
		\begin{equation}
			\rho_k'(m_k) = \pm\begin{pmatrix}1&1\\0&1 \end{pmatrix} \ \ \ \ \ \rho_k'(\ell_k') = \pm\begin{pmatrix}1&0\\0&1\end{pmatrix}
		\end{equation}
		for $k = 1, 2$ and a choice of sign determined by $SL_2(\mathbb{C})$-compatibility. By Proposition 1.6.2 of \cite{Aschenbrenner20123ManifoldG}, the regular fibers don't match, i.e. if $\varphi: \langle m_1, \ell_2 \rangle \to \langle m_2, \ell_2 \rangle$ is the gluing map, then $\varphi(\ell_1) \neq \ell_2$. Thus, $\varphi^{-1}(\ell_2)$ is a nontrivial simple closed curve in $\pi_1(\widehat{T}_{k_1})$ that is not $\ell_1$; this means it must be of the form $m_1^p\ell_1^q$ with $p \neq 0$. However, 
		\begin{equation}
			\rho_1'(m_1^p\ell_1^q) = \pm \begin{pmatrix}1&p\\0&1\end{pmatrix} \neq \pm \begin{pmatrix}1&0\\0&1\end{pmatrix}
		\end{equation}
		which means that the two representations are not conjugate under the gluing map $\varphi$. This contradicts the assertion that $\alpha_\infty$ is the trace of some $\rho_\infty: \pi_1(M) \to SL_2(\mathbb{C})$. 
		\item $\widehat{T_k}$ bounds a Seifert-fibered and a hyperbolic space. The hyperbolic space will have a holonomy representation $\rho_h$ that restricts to the torus cusp $\langle m_h, \ell_h \rangle$ as follows. 
		\begin{equation}
			\rho_h(m_h) = \pm\begin{pmatrix}1&1\\0&1\end{pmatrix} \ \ \ \ \ \rho_h(\ell_h) = \pm\begin{pmatrix}1&\tau\\0&1\end{pmatrix}
		\end{equation}
		where $\text{Im}(\tau) > 0$, and the choice of sign is determined by $SL_2(\mathbb{C})$-compatibility. This is the \emph{cusp shape} of $\rho_h$. As seen previously, the cusp shape of the holonomy representation of a Seifert-fibered manifold is 0, but it is non-zero for a hyperbolic manifold, so the two torus representations are nonconjugate, leading to a contradiction.  
		\item $\widehat{T_k}$ bounds two hyperbolic spaces. Since the gluing map is orientation-reversing, the cusp shape of one of the holonomy representations has positive imaginary part, while the other has negative imaginary part. Since the cusp shapes are different, the holonomy representations restricted to the glued boundary tori cannot be conjugate, leading to a contradiction.  
		\item $\widehat{T_k}$ bounds a twisted $I$-bundle over the Klein bottle $\tilde{K}$ and a Seifert-fibered space. Let $\rho_{\tilde{K}}$ be $\rho_\infty$ restricted to $\tilde{K}$. We know that $\rho_{\tilde{K}} = \rho_A$ or $\rho_B$. In either case, the image of the regular fiber under the induced representation is negative the identity matrix, behaving the same way as a manifold Seifert-fibered over a hyperbolic orbifold. Thus, the argument from Case 1 leads to another contradiction. 
		\item $\widehat{T_k}$ bounds a twisted $I$-bundle over the Klein bottle and a hyperbolic component. Since no element of the holonomy of the hyperbolic component restricted to the boundary torus is plus or minus the identity matrix, that representation cannot be conjugate to the twisted Klein bottle holonomy restricted to the boundary torus. We thus have a contradiction.
	\end{enumerate}
	Thus, $\{\chi_j\}$ approaches an ideal point $x$ on a norm curve of $X(M)$, and the limiting character restricted the $M_i$ is $\chi_\infty$. By Lemma 7 in \cite{Paoluzzi2010ConwaySA}, $x$ detects $T$. Since the limiting character is irreducible for some $\mathcal{H}_i$, $A_{k(C)}$ tautologically extends over $x$, so we are done. 
\end{proof}

\section{Examples}\label{sec:examples}

The main family of examples on which we apply Theorem \ref{thm:main} are the punctured JSJ tori in alternating knots. The following theorem classifies such punctured JSJ tori:

\begin{theorem}\cite{IchiharaMasai}\label{thm:ichiharamasai}
Let $M = S^3 \setminus K$ be a hyperbolic alternating knot complement. Suppose $M(r)$ is toroidal but not Seifert-fibered. Then $K$ is equivalent to one of:
\begin{enumerate}
	\item the figure-eight knot, with $r = 0, \pm 4$
	\item a two-bridge knot $K_{(b_1, b_2)}$ with $|b_1|, |b_2| > 2$, with $r = 0$ if $b_1, b_2$ are even and $r = 2b_2$ if $b_1$ is odd and $b_2$ is even
	\item a twist knot $K_{(2n, \pm 2)}$ with $|n| > 1$ and $r = 0, \pm 4$
	\item a pretzel knot $P(q_1, q_2, q_3)$ with $q_i \neq 0, \pm 1$ for $i = 1, 2, 3$ and $r = 0$ if $q_i$ are all odd, and $r = 2(q_2 + q_3)$ if $q_1$ is even and $q_2, q_3$ are odd
\end{enumerate}
\end{theorem}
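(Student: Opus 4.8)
The plan is to convert the Dehn-filling hypothesis into the existence of a small essential surface in $M$, and then run the surface theory of alternating links to pin down $K$ and read off $r$.

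\textbf{Step 1: from a toroidal filling to a punctured torus in $M$.} Suppose $M(r)$ is toroidal. Isotope an essential torus $\widehat{T}\subset M(r)$ to meet the core $\kappa$ of the filling solid torus $V_r$ transversely in the minimal number $p$ of points. If $p=0$ then $\widehat{T}$ isotopes into $M$, contradicting hyperbolicity, so $p\ge 1$ and $S:=\widehat{T}\cap M$ is essential (incompressible and $\partial$-incompressible) in $M$ with $\chi(S)=-p$ and all boundary slopes equal to $r$; capping the $p$ boundary circles with meridian disks of $V_r$ recovers $\widehat{T}$, so $S$ has genus one, i.e.\ $S$ is a $p$-punctured torus, and the annulus case is excluded since a hyperbolic knot is neither a cable nor annular. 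To bound $p$, I would either invoke an a priori geometric bound — exceptional slopes have length at most $6$ on the maximal cusp, the maximal cusp of an alternating knot has area bounded below, so $\Delta(\mu,r)$ and hence $p$ lie in a bounded range — or form the intersection graphs on $S$ and on $\widehat{T}$ coming from $S\cap(\text{meridian disks})$ and run the usual Gordon--Wu/Gordon--Luecke parity-and-valence count, forcing $p\le 2$ once the large-$p$ configurations, which make $M(r)$ Seifert-fibered or reducible, are set aside; the Seifert-fibered outcomes here are precisely those the theorem excludes.

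\textbf{Step 2: the alternating enumeration.} Fix a reduced prime alternating diagram $D$ of $K$ and normalize $S$ following Menasco, so that $S$ meets each crossing ball in a union of saddles and meets the projection sphere in a standard family of arcs and circles. For $\chi(S)\in\{-1,-2\}$ the number of saddles is bounded linearly in $|\chi(S)|$ and in the number of twist regions of $D$; a finite case analysis over the resulting combinatorial types, after normalizing $D$ by flypes, forces $D$ to be either a two-bridge diagram with two long twist regions (yielding the families $K_{(b_1,b_2)}$ and the twist knots $K_{(2n,\pm 2)}$) or a three-strand pretzel diagram $P(q_1,q_2,q_3)$, with the parity and size restrictions ($|b_1|,|b_2|>2$; $q_i\ne 0,\pm 1$; etc.) coming from the requirement that $S$ be essential rather than boundary-parallel or compressible. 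The slope $r$ is the framing the saddles induce on $\partial N(K)$, which gives $r=0$ exactly in the ``all even / all odd'' cases and $r=2b_2$, $r=2(q_2+q_3)$, or $r=\pm4$ otherwise; the figure-eight knot is isolated because it is the unique alternating knot carrying both an essential once-punctured torus ($r=0$) and an essential twice-punctured torus ($r=\pm4$).

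\textbf{Step 3: converse and Seifert bookkeeping.} For each knot on the list, exhibit the punctured torus directly — the genus-one Seifert surface for two-bridge genus-one knots, a twist-region/Conway-sphere twice-punctured torus for the twist and pretzel families — check via the toroidal graph that its cap-off is an essential torus in $M(r)$, and verify that in the listed cases $M(r)$ is genuinely toroidal (its JSJ pieces are hyperbolic or honest Seifert pieces glued along a torus) rather than globally Seifert-fibered, so there is no overlap with the classification of toroidal Seifert-fibered fillings. The technical heart is Step 2: getting the saddle count sharp enough in an arbitrary reduced alternating diagram to produce not merely finiteness but the explicit two-bridge/pretzel dichotomy, together with the flype normalization of $D$, is exactly the delicate part of the Menasco--Thistlethwaite program; Step 1 is standard Dehn-surgery combinatorics and Step 3 is a finite verification.
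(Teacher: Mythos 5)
This statement is a citation of Ichihara--Masai's classification, not something the paper proves, so there is no in-paper proof to compare your attempt against. Your outline is a plausible high-level sketch of how such a classification goes (normalize the essential surface in the style of Menasco, bound its combinatorial complexity, enumerate), but the two steps where the real work lives are asserted rather than argued.

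First, the bound $p\le 2$ on the number of punctures. You gesture at length bounds on exceptional slopes and at Gordon--Wu intersection-graph counting, but then write that large-$p$ configurations are ``set aside'' because they ``make $M(r)$ Seifert-fibered or reducible'' and ``the Seifert-fibered outcomes here are precisely those the theorem excludes.'' That is circular: the fact that an essential torus in a non-Seifert toroidal filling of an alternating knot meets the core at most twice is an \emph{output} of this classification, not an input you may invoke to truncate your case analysis. A genuine proof has to establish the puncture bound from the combinatorics or geometry, not from the shape of the answer.

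Second, the ``finite case analysis over the resulting combinatorial types'' in Step 2 is the heart of the matter and is not carried out. A saddle count bounded in terms of $|\chi(S)|$ and the number of twist regions of $D$ does not by itself yield finitely many diagrams; you also need an a priori bound on the number of twist regions of an alternating diagram admitting an exceptional slope (Lackenby-style), and then the entire reduction from ``few twist regions, few saddles'' to ``two-bridge with two long twist regions, or three-strand pretzel, with the stated parity and size restrictions on $b_i$, $q_i$, and $r$'' remains to be done. That reduction is essentially the content of the theorem.

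One minor slip: in Step 1 you raise and then dismiss ``the annulus case,'' but once $\widehat{T}$ is isotoped to meet the core minimally and is cut open along $p$ meridian disks, the surface $S$ is automatically a genus-one surface with $p$ boundary circles, so genus zero never arises. What actually needs to be ruled out at that stage is that $S$ might compress or be boundary-parallel in $M$; that follows from essentiality of $\widehat{T}$ in $M(r)$ together with minimality of $p$, and has nothing to do with anannularity of $M$.
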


The cases where $K$ is a two-bridge knot and $r = 0$ are addressed in \cite{Wang}. In this section, we largely discuss the hyperbolic alternating knot complements admitting essential twice-punctured JSJ tori, i.e. the cases in the above theorem where $r \neq 0$. In order to apply Theorem \ref{thm:main} to these specific examples, we must understand the JSJ decompositions of the associated toroidal Dehn fillings.

\subsection{Two-bridge knots}

Let $M$ be a two-bridge knot complement. Theorem 1.2 of \cite{Macasieb2009OnCV} and Theorem 7.3 of \cite{Petersen2014GonalityAG} combine to show that all irreducible components of $X(M)$ are $\beta$-rigid for all slopes $\beta$. In order to satisfy the hypotheses of Theorem \ref{thm:main}, it only remains to study the JSJ decompositions of the toroidal Dehn fillings associated to the essential twice-punctured tori in two-bridge knots. For the case of the twice-punctured tori in twist knots, we use Proposition 2.2 from \cite{Teragaito}:

\begin{lemma}[\cite{Teragaito}]\label{lma:teragaito}
Let $M = S^3 \setminus K$ be a hyperbolic twist knot. Then $M(4, 1)$ is a graph manifold whose JSJ regions are the twisted $I$-bundle over the Klein bottle and a torus knot exterior. In addition, the regular fiber of the torus knot exterior is not identified with the regular fiber of the twisted $I$-bundle over the Klein bottle viewed as a Seifert-fibered space over $D^2(2, 2)$. 
\end{lemma}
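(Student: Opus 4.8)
This is Proposition~2.2 of \cite{Teragaito}; we sketch the argument one would give. The starting point is Theorem~\ref{thm:ichiharamasai}, which already guarantees that for a hyperbolic twist knot the filling $M(4,1)$ is toroidal but not Seifert fibered, so its JSJ decomposition is nontrivial. The plan is to (i) exhibit the JSJ torus explicitly, (ii) identify the two pieces it bounds, and (iii) deduce the fiber-matching clause from the fact that $M(4,1)$ is not Seifert fibered.

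For (i): a hyperbolic twist knot $K$ is $\pm 1/n$-surgery on one component of the Whitehead link, so $M(4,1)$ is obtained from the Whitehead link complement by filling its two cusps along slopes $\pm 1/n$ and $4$; equivalently one works directly from the standard twist-knot diagram with integral coefficient $4$. The essential twice-punctured torus $T\subset M$ of boundary slope $4$ furnished by Theorem~\ref{thm:ichiharamasai} caps off in $M(4,1)$ to a closed torus $\widehat T$, obtained by capping its two punctures with meridian disks of the filling solid torus; $\widehat T$ is incompressible and not boundary-parallel, hence (isotopic to) the JSJ torus certifying toroidality in Theorem~\ref{thm:ichiharamasai}.

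For (ii): one side of $\widehat T$ is recognized as the twisted $I$-bundle $\tilde{K}$ over the Klein bottle --- for instance as the filled regular neighborhood of a once-punctured Klein bottle of boundary slope $4$ spanned by $K$, of which $T$ is the frontier, so that this side has $\pi_1\cong\langle s,t\mid s^2t^2\rangle$ and carries the $D^2(2,2)$ Seifert framing on $\widehat T$. The other side must be shown to be the exterior of a torus knot $T(a,b)$ in $S^3$; I would establish this via Rolfsen twists on the surgery diagram, simplifying it to a Seifert fibered space over a disk with two cone points, the multiplicities $(a,b)$ depending on the twisting parameter (the trefoil exterior in the case of the figure-eight knot). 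Carrying out this reduction, and in particular pinning down the gluing homeomorphism along $\widehat T$ consistently with the convention under which ``slope $(4,1)$'' means the $4$-surgery, is the computational core of \cite{Teragaito} and the main obstacle.

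For (iii): were the regular fiber of the torus knot exterior identified on $\widehat T$ with the regular fiber of the $D^2(2,2)$ Seifert fibering of $\tilde{K}$, the two Seifert fibrations would match up across $\widehat T$ to give a global Seifert fibration of $M(4,1)$, contradicting the ``not Seifert fibered'' part of Theorem~\ref{thm:ichiharamasai}. Hence the two regular fibers are distinct slopes on $\widehat T$, which is exactly the second assertion of the lemma.
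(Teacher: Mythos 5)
The paper does not prove this lemma: it simply cites Proposition~2.2 of \cite{Teragaito}, so there is no internal argument to compare against. Judging your sketch on its own terms, the outline is reasonable and part (iii) is a clean observation --- if the regular fiber of the $D^2(2,2)$ fibering of $\tilde{K}$ were isotopic on $\widehat{T}$ to the regular fiber of the torus knot exterior, the two fibrations would patch to a global Seifert fibration of $M(4,1)$, contradicting the fact that this filling is toroidal but not Seifert fibered. (This actually shows the stronger statement that neither Seifert fibering of $\tilde{K}$, the $D^2(2,2)$ one or the M\"obius-band one, can match the torus knot exterior's fiber, since there is no intermediate cable space.)

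Two gaps should be flagged. First, you invoke Theorem~\ref{thm:ichiharamasai} to conclude ``$M(4,1)$ is toroidal but not Seifert fibered,'' but as stated in the paper that theorem only gives the forward implication (toroidal-but-not-SF $\Rightarrow$ $(K,r)$ appears in the list); what your argument needs is the converse, which is true but should be cited in the correct direction or taken from \cite{Teragaito} directly. Second, and more seriously, part (ii) --- showing that the piece on the far side of $\widehat{T}$ is a torus knot exterior rather than some other small Seifert fibered space, identifying the torus knot type, and pinning down the gluing along $\widehat{T}$ --- is precisely the content of Teragaito's Proposition~2.2, and you explicitly defer it as ``the computational core.'' Without that Kirby-calculus computation the lemma is not proved; your sketch reduces the statement to the very surgery exercise that constitutes the cited result, so while the structure is right, the proposal cannot stand alone as a proof.
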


For the case of the other two-bridge knots with twice-punctured tori, we use Lemma 3.1 from \cite{ClayTeragaito}:

\begin{lemma}[\cite{ClayTeragaito}]\label{lma:clayteragaito}
Let $M = S^3 \setminus K$ be the complement of the two bridge knot $K_{b_1, b_2}$ with $b_1$ odd and $b_2$ even. (Here $b_1, b_2$ coincides with the continud fraction $[-; b_1, b_2]$, as in \cite{Hatcher1985IncompressibleSI}.)Then $M(2b_2)$ is a graph manifold whose JSJ regions are the twisted $I$-bundle over the Klein bottle, a space Seifert-fibered over $A^2(b_2)$, and a torus knot exterior. In addition, the regular fiber of the space Seifert-fibered over $A^2(b_2)$ is not identified with the regular fiber of the twisted $I$-bundle over the Klein bottle viewed as a Seifert-fibered space over $D^2(2, 2)$. 
\end{lemma}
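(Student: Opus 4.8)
\textbf{Proof proposal for the final lemma (Lemma 3.1 of \cite{ClayTeragaito}).}

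The plan is to reduce the statement to a combination of the known classification of toroidal Dehn fillings on two-bridge knots and an explicit Seifert-fibered-space analysis of the relevant Dehn filling. First I would recall the continued-fraction description of the two-bridge knot $K_{b_1,b_2}$ and identify the essential twice-punctured torus of boundary slope $2b_2$: the torus is built from the two ``clasp'' regions, separating $M$ into three pieces along the classification of Theorem \ref{thm:ichiharamasai}(2), and after the $2b_2$-filling the filling annulus caps it off to two JSJ tori in $M(2b_2)$. The central piece corresponds to the $A^2(b_2)$ Seifert region (the twisting region with $b_2$ full twists contributing the exceptional fiber of order $b_2$), while the two outermost clasp regions cap off respectively to a torus knot exterior (the odd parameter $b_1$ produces the torus knot) and to a twisted $I$-bundle over the Klein bottle (a clasp region filled along the relevant slope is the standard model for $\tilde{K}$). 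This identification is essentially Teragaito's and Clay-Teragaito's computation; I would cite their normal-form analysis of these fillings rather than redo the JSJ decomposition from scratch, noting that since $M$ is hyperbolic and the filling is toroidal but not hyperbolic (by Thurston), the graph-manifold structure follows and these three pieces are exactly its JSJ components once one checks none of the three gluing tori is compressible or boundary-parallel.

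The more delicate half of the statement is the assertion about regular fibers: the regular fiber of the $A^2(b_2)$-piece is \emph{not} identified, under the JSJ gluing, with the regular fiber of $\tilde K$ in its Seifert fibration over $D^2(2,2)$. Here I would argue by computing the slopes on the common JSJ torus $T$. The twisted $I$-bundle $\tilde K$ admits exactly two Seifert fibrations: one over the Möbius band with no exceptional fibers (fiber slope the ``$st$'' meridian in the notation of Lemma \ref{lma:twistedbundle}) and one over $D^2(2,2)$ (fiber slope the ``$s^2$'' longitude). The $A^2(b_2)$-piece has a unique Seifert fibration, hence a well-defined fiber slope $\phi$ on $T$. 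Using the explicit gluing coming from the clasp-region decomposition — i.e., tracking how the Dehn-filling annulus meets $T$ — one reads off that $\phi$ is glued to the Möbius-band fiber of $\tilde K$ (equivalently, $\phi$ is the slope $st$), not to the $D^2(2,2)$-fiber $s^2$; the two candidate slopes on $T$ have intersection number $1$ (since $st$ and $s^2$ generate $\pi_1(T)$), so they are genuinely distinct and the non-identification is strict. This is exactly the computation carried out by Clay-Teragaito, so in a self-contained write-up I would reproduce their slope bookkeeping; in the present paper I would simply invoke their Lemma 3.1 verbatim, since all that is needed downstream is the conclusion.

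The step I expect to be the main obstacle is the regular-fiber comparison, because it is not a homeomorphism-type statement but a statement about the specific gluing map on the JSJ torus, and it is precisely the hypothesis required to invoke condition (4) of Theorem \ref{thm:main}. Getting it right requires pinning down the framing conventions: which primitive class on $T$ the filling slope $2b_2$ induces, and matching that against the two natural fibrations of $\tilde K$. If one wanted to avoid importing \cite{ClayTeragaito} wholesale, the cleanest route is homological: compute $H_1$ of the would-be Seifert-fibered space obtained by gluing the $A^2(b_2)$-piece to $\tilde K$ along the fiber-to-$D^2(2,2)$-fiber identification, observe it does not match $H_1(M(2b_2))$ (which is determined by the knot's Alexander data and the filling slope), and conclude the identification cannot occur. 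Either way, the non-identification is the crux; everything else is an application of the established toroidal-filling classification for two-bridge knots.
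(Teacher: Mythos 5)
The paper does not prove this lemma at all; it is imported verbatim as Lemma~3.1 of \cite{ClayTeragaito}, and your proposal correctly recognizes this and ultimately does the same. Your accompanying sketch — linear JSJ graph with the cable piece in the middle, and the fiber of the $A^2(b_2)$ piece matching the Möbius-band (rather than $D^2(2,2)$) fiber of $\tilde K$ — is consistent with the remark the paper makes immediately after the lemma, so no further review is needed.
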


\begin{remark}
The lemma actually shows that the regular fiber of the cable space is identified with the regular fiber of $\tilde{K}$ viewed as a Seifert-fibered space over the Mobius strip. 
\end{remark}

These two results show that all two-bridge knots with essential twice-punctured JSJ tori in their knot complements satisfy the hypotheses of Theorem \ref{thm:main}. We thus have the following corollary. 

\begin{theorem}\label{thm:twobridge}
Any system of punctured JSJ tori in a two-bridge knot with boundary slope $\beta$, with the exception of the minimal genus Seifert surface of the figure-eight knot, is detected by an ideal point $x$ at which the limiting character $x$ restricts to the holonomy trace of the JSJ components described in Lemmas \ref{lma:teragaito} and \ref{lma:clayteragaito}.
\end{theorem}

As discussed in Remark \ref{rmk:known}, detection of these twice-punctured tori can be deduced from previous results. However, we now understand the limiting characters at some of these ideal points, giving the detection an additional geometric meaning. 

\subsection{Pretzel knots}

Less is generally known regarding character varieties and toroidal surgeries of pretzel knots. We have the following result from \cite{IchiharaJongKabaya} which determines the JSJ decomposition of the toroidal Dehn fillings of $(-2, p, q)$ pretzel knots with $3 \leq p \leq q$, $p, q$ odd:

\begin{theorem}[\cite{IchiharaJongKabaya}]
Consider the toroidal manifold $M$ obtained by $2(p+q)$-surgery on the hyperbolic $(-2, p, q)$ pretzel knot with odd integers $3 \leq p \leq q$. Then $M$ admits one JSJ torus which splits $M$ into the twisted $I$-bundle over the Klein bottle and $M_{p, q}$, which is either Seifert-fibered over $D^2(p, q)$, $p > 2, q \geq 2$, or hyperbolic. 
\end{theorem}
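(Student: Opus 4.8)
The plan is to exhibit the essential twice-punctured torus explicitly, as the horizontal boundary of a twisted $I$-bundle over a non-orientable spanning surface of $K = P(-2,p,q)$, and then to follow the two complementary pieces through the Dehn filling at slope $2(p+q)$. Since $K$ has exactly one even pretzel parameter, the checkerboard spanning surface $N$ built from the standard diagram (two disks joined by three twisted bands) is non-orientable; a short Euler-characteristic and parity count identifies it as a once-punctured Klein bottle, and a boundary-slope computation for this checkerboard surface gives exactly the slope $2(p+q)$ appearing in Theorem \ref{thm:ichiharamasai}. Its regular neighborhood $\nu(N) \subset M = S^3 \setminus K$ is the twisted $I$-bundle over $N$, whose horizontal boundary $T := \partial_h \nu(N)$ is the orientation double cover of $N$ --- an orientable surface with $\chi = -2$ and two boundary circles, i.e. a twice-punctured torus, with $\partial T$ two parallel copies of the slope-$2(p+q)$ curve. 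One checks, e.g. via Oertel's classification of essential surfaces in Montesinos link complements, that $T$ is (isotopic to) the essential twice-punctured torus guaranteed by Theorem \ref{thm:ichiharamasai}, so $\mathcal{T} = T$ and $\beta = 2(p+q)$ in the notation of Theorem \ref{thm:main}.

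Next, Dehn filling $M$ along $2(p+q)$ glues in a solid torus $V$ in which each of the two components of $\partial T$ bounds a meridian disk; thus $T$ caps off to a closed torus $\hat T \subset M(2(p+q))$, and $M(2(p+q))$ is the union of the two pieces obtained by cutting $V$ along those two disks and gluing the resulting slabs onto $\nu(N)$ and onto $\overline{M \setminus \nu(N)}$ respectively. Gluing a slab $D^2 \times I$ onto $\nu(N)$ along its vertical annulus $\partial N \times I$ is exactly attaching a $2$-handle capping the puncture of $N$ at its surface framing, which is the slope $2(p+q)$; this turns $N$ into the closed Klein bottle, so the first piece $A$ is the twisted $I$-bundle over the Klein bottle $\tilde K$ (as a check, $\pi_1(A) = \langle s, t \mid s^2 t^2 \rangle$ with the peripheral structure of Section \ref{sec:background}). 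The second piece $B = \overline{M \setminus \nu(N)}$ with its slab attached is the exterior, inside a solid torus, of the two-bridge-type link carrying the two odd tangles $p$ and $q$, capped off by the surgery; by geometrization together with the classification of such surgered Montesinos pieces, $B$ is either Seifert-fibered over a disk with two cone points of orders $p$ and $q$ --- which, since $p, q \geq 3$, is $D^2(p,q)$ with $p > 2, q \geq 2$ --- or hyperbolic. This gives the dichotomy for $M_{p,q} := B$.

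It remains to see that $\hat T$ is the JSJ torus. It is incompressible and not boundary-parallel, since $A = \tilde K$ and $B$ are each irreducible with incompressible boundary and neither is a solid torus or $T^2 \times I$; and when $B$ is Seifert-fibered, the gluing along $\hat T$ is not fiber-preserving --- the regular fiber of $B$ is not carried to the regular fiber of the $D^2(2,2)$-fibering of $\tilde K$ --- by Proposition 1.6.2 of \cite{Aschenbrenner20123ManifoldG}, just as in the proof of Theorem \ref{thm:main}, so $\hat T$ is not absorbed into a larger Seifert-fibered piece. Hence the decomposition of $M(2(p+q))$ along the single torus $\hat T$ is its JSJ decomposition, with pieces $\tilde K$ and $M_{p,q}$, as claimed.

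The hard part is the middle step: the surgery bookkeeping needed to be certain that the $-2$-tangle side collapses to exactly $\tilde K$ with the correct peripheral curves, and the determination of precisely which pairs $(p,q)$ yield a Seifert-fibered $M_{p,q}$ rather than a hyperbolic one. An efficient way to organize this is to pass to the double branched cover $\Sigma(2,p,q)$ of $(S^3, K)$, where the filling becomes surgery on an explicit curve in this Brieskorn sphere and the branching involution lets one recognize the torus decomposition by Seifert-fibered-space combinatorics --- essentially the method used to classify toroidal Dehn surgeries on knots in $S^3$.
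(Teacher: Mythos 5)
The paper does not actually prove this statement: it is imported verbatim as a cited result from \cite{IchiharaJongKabaya}, so there is no internal proof to compare your argument against. Judging your sketch on its own merits: the overall architecture is the standard and correct one for this kind of result. Identifying the twice-punctured torus as the horizontal boundary of a twisted $I$-bundle neighborhood of a non-orientable checkerboard surface (a once-punctured Klein bottle, by the Euler-characteristic and parity count you give), observing that Dehn filling along its boundary slope caps that surface off to a closed Klein bottle, and concluding that one side of $\hat T$ is $\tilde K$ --- this is all right, and it also explains \emph{why} $\tilde K$ shows up, which the bare statement of the theorem does not.

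That said, what you have written is really a description of the setup plus a promissory note for the hard parts, and you say as much. Three things in particular are stated but not established. First, you assert that the boundary slope of the non-orientable checkerboard surface is $2(p+q)$; this is true and matches the slope appearing in Theorem~\ref{thm:ichiharamasai}, but the sketch gives no computation. Second, and more seriously, the analysis of the complementary piece $B$ is entirely deferred: to conclude that $B$ is either hyperbolic or Seifert-fibered over a disk with exactly two cone points (of the stated orders) one must show $B$ is irreducible and atoroidal --- otherwise further JSJ tori appear and the claim that $\hat T$ is \emph{the} JSJ torus fails --- and one must actually extract the Seifert invariants of $B$ in the non-hyperbolic case; ``geometrization together with the classification of such surgered Montesinos pieces'' is a placeholder, not an argument, and this is precisely where the content of \cite{IchiharaJongKabaya} lives. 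Third, you should be careful that the $p,q$ labelling the cone orbifold $D^2(p,q)$ in the statement need not literally be the pretzel parameters (note the paper's inequalities $p>2,\ q\geq 2$ versus the odd $3\leq p\leq q$ hypothesis); verifying that the cone point orders are exactly the odd pretzel integers is part of what has to be checked, not something that can be read off from notation. So: right skeleton, but the step you flag as ``the hard part'' is genuinely the entire theorem, and as written the proposal does not close it.
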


All of these pretzel knots satisfy the hypotheses of Theorem \ref{thm:main} involving the JSJ decompositions of the toroidal Dehn fillings. Furthermore, Mattman \cite{Mattman} showed the following:

\begin{theorem}[Theorem 1.6, Claim 6.3 in \cite{Mattman}]
The $SL_2(\mathbb{C})$-character variety of the $(-2, 3, 2n+1)$ pretzel knot complement with $n \ncong 1 \mod 3$ consists of the canonical component and the curve of reducible components. In addition, the regular fiber of the twisted $I$-bundle over the Klein bottle viewed as a fibering over $D^2(2, 2)$ does not match with the regular fiber of the other component.
\end{theorem}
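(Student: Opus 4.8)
The plan is to reduce both assertions to results already established in the literature. The first assertion---that for $n \not\equiv 1 \pmod{3}$ the $SL_2(\mathbb{C})$-character variety of the $(-2,3,2n+1)$-pretzel knot complement is the union of the canonical component and the curve of reducible characters---is exactly Theorem 1.6 of \cite{Mattman}, which we quote directly; the congruence restriction is precisely the hypothesis under which Mattman's analysis of the defining polynomials yields no further irreducible components, so no new argument is needed there. The fiber-matching assertion is Claim 6.3 of \cite{Mattman}, but I would also give the following self-contained argument for it.

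First we recall that $\tilde{K}$ admits exactly two Seifert fibrations: one over the Mobius band with no exceptional fibers, and one over $D^2(2,2)$; their regular fibers are distinct slopes on $\partial\tilde{K}$. Writing $\beta = 2(p+q) = 4n+8$ for the relevant surgery slope, the theorem of \cite{IchiharaJongKabaya} gives $M(\beta) = \tilde{K} \cup_{\hat{T}} M_{3,2n+1}$ with a single JSJ torus $\hat{T}$, where $M_{3,2n+1}$ is either hyperbolic or Seifert-fibered over a disk with two cone points. If $M_{3,2n+1}$ is hyperbolic there is nothing to prove, since it carries no regular fiber, so we may assume it is Seifert-fibered.

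The key step is the following dichotomy: if the regular fiber of \emph{either} Seifert fibration of $\tilde{K}$ were carried by the JSJ gluing $\hat{T} \to \hat{T}$ onto the regular fiber of $M_{3,2n+1}$, then $M(\beta)$ would be a union of two Seifert-fibered pieces glued along a torus that respects their fibrations, hence would itself be Seifert-fibered over the closed $2$-orbifold obtained by gluing the two base orbifolds along their boundary circles. But $(-2,3,2n+1)$ is an alternating pretzel knot, so by case (4) of the classification in \cite{IchiharaMasai} the filling $M(\beta)$ is toroidal and \emph{not} Seifert-fibered. This contradiction shows that no fiber of $\tilde{K}$ is identified with the fiber of $M_{3,2n+1}$; in particular the regular fiber of $\tilde{K}$ viewed as a fibering over $D^2(2,2)$ is not, which is precisely the claim.

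The main obstacle here is bookkeeping rather than genuine difficulty: one must keep track of the two distinct Seifert fibrations of $\tilde{K}$ and exclude both at once, which is exactly what the ``toroidal but not Seifert-fibered'' conclusion of \cite{IchiharaMasai} accomplishes in a single stroke. The only other point requiring care is confirming that the slope $\beta = 4n+8$ occurring here coincides with the slope labelled $2(q_2+q_3)$ in \cite{IchiharaMasai} for the pretzel presentation with even strand $q_1=-2$; this is a direct comparison of the two normalizations, using invariance of pretzel knots under cyclic permutation and reversal of their parameters.
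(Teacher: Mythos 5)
The paper does not prove this statement; it simply quotes Theorem 1.6 and Claim 6.3 of Mattman. Your first assertion, appealing directly to Mattman's Theorem 1.6, is fine. Your ``self-contained'' argument for the fiber-matching assertion is conceptually appealing but contains a genuine error: you assert that $(-2,3,2n+1)$ is an alternating pretzel knot and then invoke case (4) of the Ichihara--Masai classification, which only applies to alternating knot complements. The $(-2,3,2n+1)$ pretzel knots are the standard examples of non-alternating knots---their pretzel parameters have mixed signs, and case (4) of Theorem \ref{thm:ichiharamasai} concerns $P(q_1,q_2,q_3)$ with all $q_i$ of the same sign. So Ichihara--Masai gives you nothing here, and the ``direct comparison of normalizations'' you flag as a point of care is moot.

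The good news is that your argument is easily repaired using a fact you already cite: the Ichihara--Jong--Kabaya theorem states that $M(2(p+q))$ admits a JSJ torus splitting it into $\tilde{K}$ and $M_{p,q}$. The existence of a nonempty JSJ torus collection already implies that $M(2(p+q))$ is not Seifert-fibered, which is precisely the input your dichotomy requires. Replace the Ichihara--Masai citation with that observation and your proof goes through. With that fix, your argument---that a matching of regular fibers across $\hat{T}$ would make $M(\beta)$ Seifert-fibered, contradicting the nontriviality of the JSJ decomposition---is a clean conceptual proof that rules out matching with \emph{either} Seifert fibration of $\tilde{K}$ in one stroke, rather than an explicit slope computation as in Mattman's Claim 6.3.
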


These two results combined gives 

\begin{corollary}
Let $K$ be a $(-2, 3, 2n+1)$ pretzel knot, and let $M = S^3 \setminus K$ be its complement in the three-sphere. Then the twice-punctured torus of slope $2(2n+4)$ is detected by an ideal point $x$ on the canonical component of the character variety, and the limiting character restricts to the holonomy trace of $D^2(p, q)$ and the twisted $I$-bundle over the Klein bottle. 
\end{corollary}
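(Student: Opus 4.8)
The plan is to assemble the two preceding theorems of this subsection and feed their conclusions into the main theorem. First I would fix $K$ to be the $(-2,3,2n+1)$-pretzel knot with $n\not\equiv 1 \pmod 3$ and $M = S^3\setminus K$, and record that the relevant toroidal slope is $\beta = 2(p+q)$ with $p=3$, $q=2n+1$, so $\beta = 2(2n+4)$; this is the slope along which the twice-punctured torus caps off. I would then verify the four hypotheses of Theorem \ref{thm:main} one at a time. Hypothesis (1): by Mattman's theorem, $X(M)$ consists of the canonical component and the curve of reducible characters, and the canonical component is a norm curve (it contains the discrete faithful character; one cites the same norm-curve facts used for two-bridge knots, or notes this is part of Mattman's analysis). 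Hypothesis (2): by the Ichihara--Jong--Kabaya theorem, $M(\beta)$ has exactly one JSJ torus splitting it into $\tilde K$ and $M_{p,q}$, where $M_{p,q}$ is either Seifert-fibered over $D^2(p,q)$ with $p>2,q\ge 2$ or hyperbolic — both of which appear in the allowed list (2a) or (2c). Hypothesis (3): since $n=2$ here (only one JSJ torus), we must check the single torus does not bound two twisted $I$-bundles over the Klein bottle; it does not, since one side is $M_{p,q}$, which is Seifert-fibered over a disk with two cone points of orders $\ge 2,\ge 3$ or hyperbolic, hence not $\tilde K$. Hypothesis (4): this is exactly the fiber-matching statement in Mattman's Claim 6.3 — the regular fiber of $\tilde K$ viewed as Seifert-fibered over $D^2(2,2)$ is not glued to the regular fiber of the other JSJ component.

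Once all four hypotheses are checked, Theorem \ref{thm:main} applies directly: the system $\mathbb{T}$ (here a single twice-punctured torus $T_1$ of slope $\beta = 2(2n+4)$) is detected by an ideal point $x\in X(M)$, and $A_{k(C)}$ tautologically extends over $x$. It remains to identify which component $C$ this is. Since the reducible component carries only reducible characters and the detected ideal point sits on a norm curve whose limiting character is irreducible on some JSJ piece, the curve $C$ must be the canonical component; I would spell this out by noting that the ideal point produced in the proof of Theorem \ref{thm:main} lies on a norm curve containing irreducible characters, and the only such component is the canonical one. The phrase "$A_{k(C)}$ extends over $x$" in the statement should be read as "tautologically extends," matching the conclusion of Theorem \ref{thm:main}.

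The main obstacle, such as it is, is the bookkeeping in hypothesis (4): one must be careful that the Seifert fibering of $\tilde K$ referenced in Theorem \ref{thm:main}(4) — over $D^2(2,2)$ — is precisely the one whose fiber Mattman's Claim 6.3 addresses, rather than the other Seifert fibering of $\tilde K$ (over the Möbius band), which does generically get matched. The Remark following the Clay--Teragaito lemma flags exactly this subtlety in the two-bridge setting, and the same caution applies here. A secondary point worth a sentence is confirming that when $M_{p,q}$ is Seifert-fibered over $D^2(p,q)$ the parameters genuinely satisfy $p\ge 2$ and $q>2$ (so that Lemma \ref{lma:disknegative} and hence Theorem \ref{thm:compatible} apply); here $\{p,q\}$ comes from $\{3, 2n+1\}$ suitably reordered, and since $n\ge 2$ in the hyperbolic range both cone orders exceed $2$, with at least one being $3$, so the condition holds. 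With these checks in place the corollary follows immediately.
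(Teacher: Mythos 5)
Your proof is correct and takes essentially the same route as the paper, which simply asserts that the Ichihara--Jong--Kabaya and Mattman results combine to give the corollary via Theorem \ref{thm:main}; you have just carried out the hypothesis-by-hypothesis check that the paper leaves implicit. Your flagging of the $n \not\equiv 1 \pmod 3$ restriction (present in Mattman's theorem and in Corollary \ref{cor:examples} but omitted from the local corollary statement) and of the $D^2(2,2)$ versus M\"obius-band fibering subtlety in hypothesis (4) is appropriate and matches the paper's intent.
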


This result combined with Theorem \ref{thm:twobridge} gives Corollary \ref{cor:examples}. As discussed in Remark \ref{rmk:known}, the detection result could already be deduced from previous results, but the limiting characters were not previously well-understood. 

\subsection{Eudave-Muñoz knots}

In \cite{Gordon2000DehnSO}, the following is proved.

\begin{theorem}[\cite{Gordon2000DehnSO}]
Let $K$ be a hyperbolic knot complement that admits a non-integral toroidal Dehn filling. Then $K$ admits one twice-punctured JSJ torus. 
\end{theorem}

In the context of this paper, it is natural to ask when twice-punctured tori associated to non-integral toroidal Dehn fillings are detected by ideal points in the character variety, and what its limiting character is. In order to answer this question, we describe a special family of knots called the \emph{Eudave-Muñoz knots} \cite{eudavemunoz}:

\begin{definition}
Let $(\ell, m, n, p)$ be four integers satisfying certain conditions. Construct a tangle $B(\ell, m, n, p)$ utilizing the 4-tuple of integers. The integers symbolize the number of twists in certain regions. For a picture of this tangle, see \cite{eudavemunoz}. A \emph{Eudave-Muñoz knot} $k(\ell, m, n, p)$ is the double-branched cover of $B(\ell, m, n, p)$.
\end{definition}

Eudave-Muñoz knots satisfy the following properties:

\begin{prop}\label{prop:eudavemunoz}\cite{eudavemunoz}
Let $K = k(\ell, m, n, p)$ be a Eudave-Muñoz knot. Then:
\begin{enumerate}
	\item $K$ admits a non-integral toroidal Dehn filling that induces a twice-punctured JSJ torus.
	\item Let $M$ be the non-integral toroidal surgery on $K$, with essential torus $T$. Then $T$ splits $M$ into $M_1$ and $M_2$, where $M_1$ and $M_2$ are Seifert-fibered over the disk with two exceptional fibers. 
	\item The regular fibers of $M_1$ and $M_2$ intersect once on $T$,
	\item One of the four exceptional fibers has multiplicity two. 
\end{enumerate}
\end{prop}

\begin{example}
The simplest example of a Eudave-Muñoz knot is the $(-2, 3, 7)$-pretzel knot, which is $k(3, 1, 1, 0)$. The non-integral toroidal Dehn filling is 37/2. 
\end{example}

In \cite{gordonleucke}, it was shown that any hyperbolic knot with a non-integral toroidal Dehn surgery is a Eudave-Muñoz knot. Since these non-integral toroidal Dehn fillings satisfy many of the hypotheses of Theorem \ref{thm:main}, except for $\beta$-rigidity. We thus have the following Corollary of Theorem \ref{thm:main}.

\begin{corollary}
Let $\beta(\ell, m, n, p)$ be the non-integral toroidal slope of $k(\ell, m, n, p)$. If the complement of $k(\ell, m, n, p)$ is $\beta$-rigid, then the twice-punctured JSJ torus in this knot complement is detected by an ideal point on the character variety, and the limiting character restricts to the holonomy traces of the $D^2(p, q)$ which form the bases of the JSJ components of the $\beta$-filling.
\end{corollary}

This begs the following question.

\begin{question}
Are all Eudave-Muñoz knot complements $\beta$-rigid, where $\beta$ is the non-integral toroidal slope?
\end{question}

\begin{remark}
In private communications with the author, Nathan Dunfield found all 83 non-integral toroidal Dehn fillings on hyperbolic knot complements that exist within the SnapPy census. The first few can be quickly computed to be $\beta$-rigid, where $\beta$ is the non-integral toroidal slope. Interestingly, all of the computed non-integral toroidal Dehn fillings are $L$-spaces, so by the results of \cite{hanselman}, the fundamental groups of these surgeries are non-left orderable. 
\end{remark}

\begin{question}
Are all non-integral toroidal Dehn fillings on Eudave-Muñoz knots $L$-spaces? Equivalently, are they left-orderable? 
\end{question}

\subsection{Odd pretzel knots}

In the case of odd pretzel knots, we have the following result of \cite{sekino}. 

\begin{theorem}[\cite{sekino}]
Let $M_{p, q, r}$ be the $(p, q, r)$ pretzel knot complement. We have the following descriptions of JSJ decompositions of $M_{2p+1, 2q+1, 2r+1}(0)$. 
\begin{itemize}
	\item $M_{-3, 3, 2n+1}(0)$ has one JSJ component which is the $(2, 4)$ torus link, which is Seifert fibered over the annulus with a cone point of order 2.
	\item $M_{-3, 5, 5}(0)$ and $M_{3, -5, -5}(0)$ has two JSJ components: the trefoil knot complement and the trivial circle fiber over the thrice-punctured sphere, denoted $S^1 \times S_{0, 3}$. 
	\item For any other $(p, q, r)$, $M_{2p+1, 2q+1, 2r+1}(0)$ has one JSJ component which is a hyperbolic 3-manifold. 
\end{itemize}
\end{theorem}

All odd pretzel knot complements except for $M_{-3, 5, 5}$ and $M_{3, -5, -5}$, have their Seifert surfaces detected by an ideal point by the results of \cite{Wang}. The limiting character information is used to prove left-orderability of fundamental groups of certain Dehn fillings of the $(-3, 3, 2n+1)$ pretzel knots in \cite{orderability}. In the case of the $M_{-3, 5, 5}$ and $M_{3, -5, -5}$ pretzel knots, there must be a twice-punctured torus with slope 0 that is disjoint from the Seifert surface. We have the following detection result which determines the limiting character at the ideal point. 

\begin{theorem}\label{thm:pretzels}
Let $M$ be the $(-3, 5, 5)$ or $(3, -5, -5)$ pretzel knot complement. Then the system of punctured JSJ tori at slope 0 is detected by an ideal point on the character variety on which the trace of the longitude is nonconstant, and the limiting character at that ideal point restricts to the holonomy representation of the thrice-punctured sphere and the trefoil knot complement. 
\end{theorem}

Let $M$ be the $(-3, 5, 5)$ or $(3, -5, -5)$ pretzel knot complement, and let $\mathbb{T} = T_1 \cup T_2 \subset M$ be the union of the essential once-punctured torus $T_1$ and twice-punctured torus $T_2$. 

\begin{lemma}\label{lma:pretzels}
$M_{-3, 5, 5}(0)$ and $M_{3, -5, -5}(0)$ are $SL_2(\mathbb{C})$-compatible.
\end{lemma}

\begin{proof}
Let $T_1, T_2, T_3$ be the three tori on $\mathcal{O}_1 = S^1 \times S_{0, 3}$, and $T$ be the torus on the trefoil knot complement, denoted $\mathcal{O}_2$. Say that $T_1$ is glued to $T$, while $T_2$ is glued to $T_3$. From the proof of Lemma \ref{lma:disknegative}, since the trefoil knot complement is Seifert-fibered over $D(2, 3)$, the regular fiber must be mapped to $-I$ under the holonomy representation. By the discussion in Section 3.1.4 of \cite{sekino}, the regular fiber in $T_1$ is glued to the meridian of $T$. Let $\rho_2: \pi_1(\mathcal{O}_2) \to SL_2(\mathbb{C})$ be the holonomy representation which takes the meridian to a matrix with trace 2, and let $\rho_1: \pi_1(\mathcal{O}_1) \to SL_2(\mathbb{C})$ be the representation which has trace 2 on all elements of $\pi_1(T_2)$ and $\pi_1(T_3)$ (so in particular on the regular fiber), and trace -2 on the meridian of $T_1$. This realizes $SL_2(\mathbb{C})$-compatibility. 
\end{proof}

We now mimic the proof strategy of Theorem \ref{thm:main}. At points where the proof of Theorem \ref{thm:main} applies \textit{mutatis mutandis}, we will refer to that proof.

\begin{proof}[Proof of Theorem \ref{thm:pretzels}]
Let $H = M \setminus T_1$ be the complement of just the Seifert surface; this is a genus 2 handlebody since the Seifert surface is built by Seifert's algorithm and is hence free. We thus have a map $r': X(M) \to X(H) = \mathbb{C}^3$. In order to compute the Zariski-closure of the image, we record the \emph{Lin presentation} of the fundamental group, which is found in \cite{sekino}:
\begin{equation}
	\pi_1(M) = \langle a, b, t \mid ta^3bababt^{-1} = a^3baba, tb^{-1}ababt^{-1} = b^{-1}ababa \rangle
\end{equation}
(Without loss of generality, we took only the fundamental group of $M_{-3, 5, 5}$. The proof for $M_{3, -5, -5}$ is exactly the same.) Here, $a, b$ are generators for the fundamental group of the Seifert surface, and $t$ is the meridian. We can compute the Zariski-closure of the image of $r$, denoted $V'$, as follows. Set 
\begin{equation}
	m_1 = a^3babab \ \ \ \ \ \ell_1 = a^3baba \ \ \ \ \ \ell_1 = b^{-1}abab \ \ \ \ \ \ell_2 = b^{-1}ababa
\end{equation}
Then let $x = \text{tr}(a), y = \text{tr}(b), z = \text{tr}(ab)$, and any word $w$ in $a, b$ can be expressed as a polynomial in $x, y, z$. The equations defining $V'$ are then
\begin{equation}
	\text{tr}(m_1) = \text{tr}(m_2) \ \ \ \ \ \text{tr}(\ell_1) = \text{tr}(\ell_2) \ \ \ \ \ \text{tr}(m_1\ell_1) = \text{tr}(m_2\ell_2)
\end{equation}
According to Sage, there are three one-dimensional components of this variety, defined by the following systems of equations:
\begin{equation}
	C_1 = \{x + z - 1 = z^2 + y - z - 2 = 0\}
\end{equation}
\begin{equation}
	C_2 = \{xz - y = x^2 + z^2 - 3 = z^3 + xy - 3z = 0\}
\end{equation}
\begin{equation}
	C_3 = \{x - z = yz^3 - y^2z - z^3 - yz - z^2 + y + 3z = 0\}
\end{equation}
Notice that $M \setminus \mathbb{T} = H \setminus T_2$ consists of two 3-manifolds, one of which has boundary a genus 2 surface, and the other of which has boundary a genus 3 surface. Denote the complementary regions $M_1, M_2$, where $\partial M_1 = S_1$, $\partial M_2 = S_2$, where $S_2$ is a genus 3 surface. 
\begin{figure}[h]
	\centering
	\includegraphics[scale=.4]{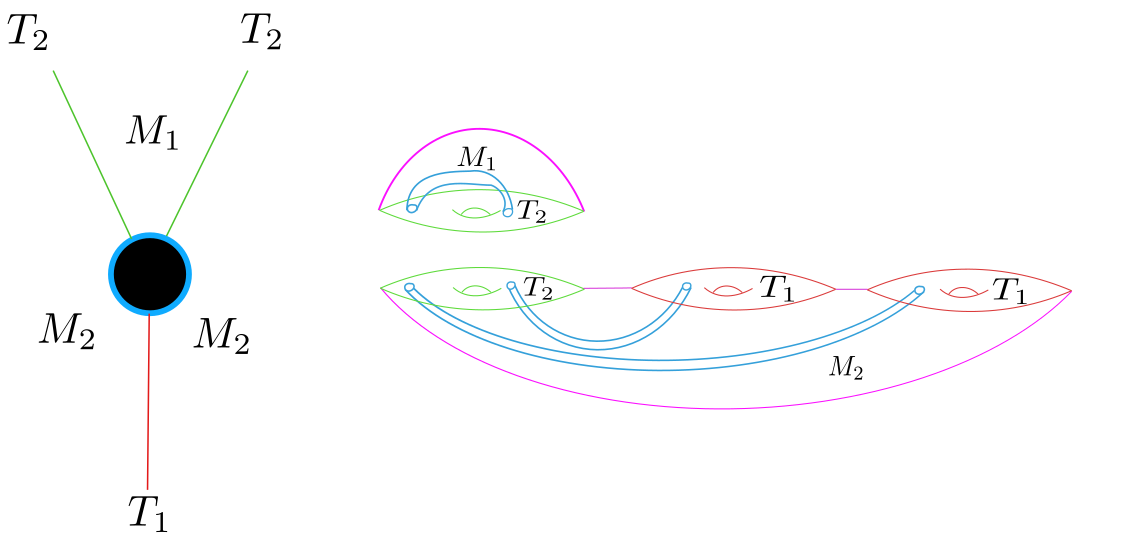}
	\caption{On the right side, the combination of the green and blue parts on the top combine to form $S_1$, and the combination of the green, red, and blue parts on the bottom combine to form $S_2$. The blue annuli come from a tubular neighborhood of the knot, and these are filled in when doing 0-surgery on $M$ to form the JSJ components.}
\end{figure} \\
We have the restriction map $r: X(M) \to X(M_1) \times X(M_2)$. We know that $\dim(X(M_1)) \geq 3$ and $\dim(X(M_2)) \geq 6$, so $\dim(X(M_1) \times X(M_2)) \geq 9$. Let $V$ be the variety inside $X(M_1) \times X(M_2)$ defined by the gluing maps, $\varphi_1$ between once-punctured tori, $\varphi_2$ between twice-punctured tori. Then $\varphi_1$ gives 2 equations (as in the proof of the main theorem from \cite{Wang}), while $\varphi_2$ gives five equations (as in the proof of Theorem \ref{thm:main}). There is also an equation which equates the traces of all punctures. So $V$ is defined by 8 equations in a variety whose dimension is at least 9 and is hence is least 1-dimensional inside $X(M_1) \times X(M_2)$. SnapPy and Sage compute that $X(M)$ is 1-dimensional. Thus $V$ is 1-dimensional by the same argument as the proof of Theorem \ref{thm:main}. Let $\mathcal{O}_1$ be the trefoil knot complement (i.e. $M_1$ with a 2-handle glued to the boundary) and let $\mathcal{O}_2$ be the trivial Seifert fibering over the thrice-punctured sphere (i.e. $M_2$ with two 2-handles glued to the boundary). Let $\chi_1: \pi_1(M_1) \to \mathbb{C}, \chi_2: \pi_1(M_2) \to \mathbb{C}$ be the holonomy traces of $\rho_1, \rho_2$ from Lemma \ref{lma:pretzels}, which realize $SL_2(\mathbb{C})$-compatibility. Then $\chi_\infty = (\chi_1, \chi_2) \in V$. Then $\chi_\infty$ is reducible when restricted to the boundaries of $M_1, M_2$.

\medskip

We now argue that it is not possible for $(\chi_1, \chi_2) = \chi_\infty \in V$ to be surrounded by traces of representations that are reducible on $S_1$ (i.e. the boundary component containing a glued twice-punctured torus $T_2'$). By definition, the traces of the two punctures of $T_2'$ must be equal in $V$. Suppose we have a sequence $\{\chi_j'\}_{i=1}^\infty \subset V$ approaching $\chi_\infty$. Let $\gamma$ be the product of the two punctures in $T_2'$. If $\chi_j'(\gamma)$ is not constantly equal to 2 on this sequence, then since $\gamma$ is a commutator on $T_2'$, $\chi_j'$ consists of irreducible traces. If $\chi_j'$ stays constantly equal to 2 on $\gamma$, then if the trace of the puncture on $T_2'$ is not constantly equal to 2 on this sequence, the commutator of the two punctures does not have constant trace 2, and $\chi_j'$ thus consists of irreducible representations. Thus, the only way for $\chi_j'$ to stay reducible on $\pi_1(T_2')$ is for $\chi_j'(g) = \chi_1(g)$ constantly for all $g \in T_2'$. If this were the case, then each $\chi_j'$ must project to the trace of some boundary-parabolic representation on the trefoil knot complement and the trivial Seifert fibering over the thrice-punctured sphere; then there would be a positive-dimensional locus of boundary-parabolic reperesentations on the trefoil knot complement or the trivial Seifert fibering over the thrice-punctured sphere. However, it is known that these manifolds do not admit positive-dimensional spaces of boundary-parabolic representations, creating a contradiction. Thus, there exists some sequence $\chi_j'$ of traces projecting to irreducible traces on $\pi_1(M_1), \pi_1(M_2)$ approaching $\chi_\infty$. By Lemma 6 in \cite{Paoluzzi2010ConwaySA}, each $\chi_j'$ lifts to some sequence $\chi_j'' \subset X(H)$ contained in either $C_1, C_2$, or $C_3$, such that the trace of $\ell_1$ approaches 2, since $\chi_\infty(\ell_1) = 2$. However, since $\chi_1$ and $\chi_2$ do not glue to form a representation on $\pi_1(H)$, $\chi_j''$ must approach an ideal point of $C_1, C_2$, or $C_3$. Sage computes that for $C_1$, the trace of $\ell_1$ is constantly equal to -2, and for $C_2$, the trace of $\ell_1$ is constantly equal to 0. Thus, $\chi_j''$ must approach an ideal point of $C_3$. On $C_3$, the trace of $[m_1, \ell_1]$ is not constantly equal to 2, so by Lemma 6 of \cite{Paoluzzi2010ConwaySA}, there exists some $\chi_j''' \subset X(M)$ restricting to $\chi_j'' \subset C_3 \subset X(H)$, which in turn restricts to $\chi_j' \subset V \subset X(M_1) \times X(M_2)$, converging to $\chi_\infty \in V \subset X(M_1) \times X(M_2)$. Since the holonomy representations $\rho_1, \rho_2$ are never conjugate on the glued JSJ torus boundaries, we are in the situation of Lemma 7 of \cite{Paoluzzi2010ConwaySA}, and hence $\mathbb{T}$ is detected by an ideal point on the component of $X(M)$ corresponding to $r'^{-1}(C_3)$, with limiting character restricting to $\chi_1$ and $\chi_2$. 
\end{proof}

\begin{remark}
Experimenting with Sage, one can find that $r'^{-1}(C_3)$ is actually a canonical component inside $X(M)$, so the ideal point detecting $\mathbb{T}$ is actually an ideal point of a canonical component. 
\end{remark}

\begin{remark}
There is another pair of lifts of holonomy representations realizing $SL_2(\mathbb{C})$-compatibility for the 0-surgery whose peripheral traces match the constant traces on the component $C_1 \subset \mathbb{C}^3$. It is possible that $C_1$ corresponds to a non-norm curve $D \subset X(M)$ coming from a component of $X(M(0))$, since the trace of the longitude is constantly equal to 2 on this component, and that the pair of surfaces is detected by an ideal point on $D$. The techniques in this paper are not well-equipped to deal with this scenario, since we cannot glue reducible characters on once-punctured tori in $X(H)$ to form characters in $X(M)$. 
\end{remark}

\bibliography{ExtensionsSequel.bib}
\bibliographystyle{plain}

\end{document}